\theoremstyle{definition}
\newtheorem{definition}{Definition}
\newtheorem{lemma}{Lemma}
\newtheorem{theorem}{Theorem}
\theoremstyle{remark}
\newtheorem*{remark}{Remark}
\newtheorem{example}[theorem]{Example}
\theoremstyle{plain}
\newenvironment{decisionproblem}[1]%
	{\vskip\topsep\noindent\textsc{#1.\/}}%
	{\par\vskip\topsep}%
\newcommand{\ZZ}{\ensuremath{\mathbb{Z}}}
\DeclareMathOperator{\im}{im}
\journal{Journal of Molecular Graphics and Modelling}
\begin{document}

\begin{frontmatter}



\title{Algorithmic Pot Generation: Algorithms for the Flexible-Tile Model of DNA Self-Assembly}

\author[ICERM,Rose]{Jacob Ashworth}
\author[ICERM,Tufts]{Luca Grossmann}
\author[ICERM,Johns Hopkins University]{Fausto Navarro\corref{cor1}}
\author[ICERM,Stonehill]{Leyda Almodovar}
\author[ICERM,Lewis]{Amanda Harsy}
\author[ICERM,CSU]{Cory Johnson}
\author[ICERM,Converse]{Jessica Sorrells}

\cortext[cor1]{Corresponding author. Email: fnavarro0806@gmail.com}

\affiliation[ICERM]{organization={Institute for Computational and Experimental Research in Mathematics},
            addressline={121 S Main St}, 
            city={Providence},
            postcode={02903}, 
            state={RI},
            country={USA}}

\affiliation[Rose]{organization={Department of Mathematics, Rose-Hulman Institute of Technology},
             addressline={5500 Wabash Ave},
             city={Terre Haute},
             postcode={47803},
             state={IN},
             country={USA}}

\affiliation[Tufts]{organization={Department of Mathematics, Tufts University},
             addressline={419 Boston Ave},
             city={Medford},
             postcode={02155},
             state={MA},
             country={USA}}

\affiliation[Hopkins]{organization={Department of Mathematics, Johns Hopkins University},
             addressline={555 Pennsylvania Ave},
             city={Baltimore},
             postcode={21218},
             state={MD},
             country={USA}}

\affiliation[Converse]{organization={Department of Mathematics, Converse University},
             addressline={580 E. Main Street},
             city={Spartanburg},
             postcode={29302},
             state={SC},
             country={USA}}

\affiliation[Stonehill]{organization={Department of Mathematics, Stonehill College},
             addressline={320 Washington St},
             city={North Easton},
             postcode={02357},
             state={MA},
             country={USA}}

\affiliation[Lewis]{organization={Department of Mathematics, Lewis University},
             addressline={1 University Pkwy},
             city={Romeoville},
             postcode={60446},
             state={IL},
             country={USA}}

\affiliation[CSU]{organization={Department of Mathematics, California State University - San Bernardino},
             addressline={5500 University Pkwy},
             city={San Bernardino},
             postcode={92407},
             state={CA},
             country={USA}}

\begin{abstract}

Recent advancements in microbiology have motivated the study of the production of nanostructures with applications such as biomedical computing and molecular robotics. One way to construct these structures is to construct branched DNA molecules that bond to each other at complementary cohesive ends. One practical question is: given a target nanostructure, what is the optimal set of DNA molecules that assemble such a structure? We use a flexible-tile graph theoretic model to develop several algorithmic approaches, including a integer programming approach. These approaches take a target undirected graph as an input and output an optimal collection of component building blocks to construct the desired structure.

\end{abstract}

\begin{graphicalabstract}
\includegraphics[width=\linewidth]{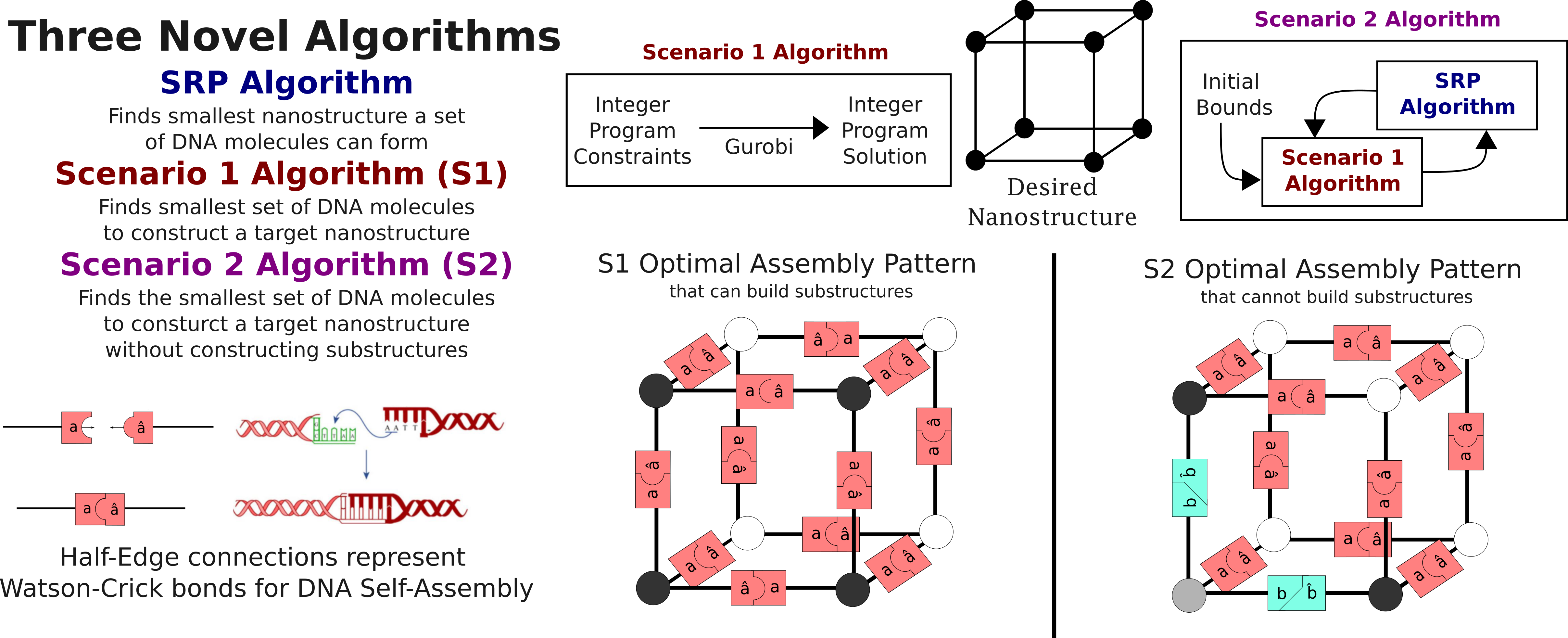}
\end{graphicalabstract}

\begin{highlights}
\item Integer programming solves the Subgraph Realization Problem
\item Integer programming finds optimal assembly designs for a given structure
\item Many new optimal assembly designs are computationally generated and verified
\end{highlights}

\begin{keyword}
DNA self-assembly \sep DNA tiles \sep tile-based assembly \sep branched junction molecules \sep computational complexity \sep spectrum of a pot \sep nanotube \sep lattice graph
\smallskip
\MSC[2020] 92E10 \sep 05C90 \sep 05C85 \sep 92D20
\end{keyword}
\end{frontmatter}

\section{Introduction}
\label{section:introduction}
Recent advancements in microbiology have motivated the study of the production of nanostructures with applications from drug delivery to molecular robotics and biomedical computing \cite{ adleman1994molecular, labean2007constructing, Lund_Manzo_2010a,Omabegho_Sha_Seeman_2009, seeman2007overview, yan2003dna}.
As these structures become smaller and more complex, production becomes increasingly difficult. As a result, laboratories exploit the Watson-Crick complementary base-pairing properties of DNA strands to create synthetic DNA sub-components with the ability to self-assemble into a target structure \cite{Jonoska_Seeman_2012, Wang}. Determining which molecules will construct a target DNA nanostructure is a critical step in the production of many microbiological tools. 

One prominent mathematical model of DNA self-assembly is referred to as the flexible-tile model, which was first introduced in \cite{jonoska2006spectrum}. This model refers to geometrically unrestrained abstractions of branched-junction DNA molecules as ``tiles." Branched junction molecules are star-shaped, consisting of several arms with `cohesive-ends' that can only bond with complementary cohesive-ends, representing complementary sequences of base pairs (see Figure \ref{fig:tile}). In \cite{ellis2014minimal}, the flexible-tile model is further developed as a graph theoretical problem in which each vertex in a graph represents a branched-junction molecule and each edge is a pair of complementary cohesive-ends that have bonded. The flexible-tile model creates a framework in which to explore two design questions pertinent to laboratories: (1) Given a desired target nanostructure, how many distinct molecule types are required for self-assembly of the target? (2) Given a collection of molecule types, what types of nanostructures can be self-assembled from them?  In \cite{ellis2014minimal} Ellis-Monaghan et. al. also introduce three ``scenarios," representing increasingly constrained levels of laboratory requirements, in which to consider these questions. These scenarios categorize the extent to which creation of byproducts (i.e. structures different from the target structure) are tolerated in the self-assembly process. We recap the relevant mathematical details of the model in 
Section \ref{subsec:model}.

In this paper, we develop algorithms to find the optimal multi-set of branched-junction molecules realizing a target DNA nanostructure. This requires engagement with two main combinatorial problems, both of which were shown to be NP-hard in \cite{sorrels}. 

The first problem is to determine whether a chosen set of branched-junction molecules could self-assemble into a nanostructure of a smaller order than the target structure. This is known as the Subgraph Realization Problem (SRP). A previous algorithm introduced in \cite{sorrels} addresses the SRP in a limited range of cases. Motivated by that work, we formulate the SRP as an integer program In Section \ref{subsec:srp}, allowing us to solve it for arbitrary numbers of molecules under feasible computational constraints. Our integer programming based approach is capable of finding minimal tile sets for arbitrary pots with dozens of tiles and bond-edge types in less than a tenth of a second (see Table \ref{table:SRPVerify}).

The second problem, which we refer to as the Optimal Pot Problem (OPP), is to determine whether a given set of molecules assembles into a graph. We construct another integer program that achieves this goal in Section \ref{subsec:lp_opp}. We use this integer program in Section \ref{subsec:s1_alg} to solve the problem in the ``scenario" where all byproducts are tolerated. We then combine this integer program with the SRP integer program to address a more difficult version of the problem in which no byproduct structures of smaller size than the target structure are permitted to form from the proposed set of molecules in Section \ref{subsec:s2_alg}. As a result, we are able to computationally determine minimal sets of tiles, i.e. abstract representations of molecules, for a wide variety of graphs in two of the three ``scenarios" of restriction described in \cite{ellis2014minimal}.

The algorithms presented in this paper (shown in Figure \ref{fig:all_algo}) can serve not only as a tool for laboratories seeking to produce target graph structures, but also for researchers looking to generalize patterns in families of graphs. Section \ref{sec:results} discusses the implementation details, including verification of known optimal solutions and reporting of new solutions found by the algorithm for a variety of graphs.

\begin{figure}
    \centering
    \includegraphics[width=6in]{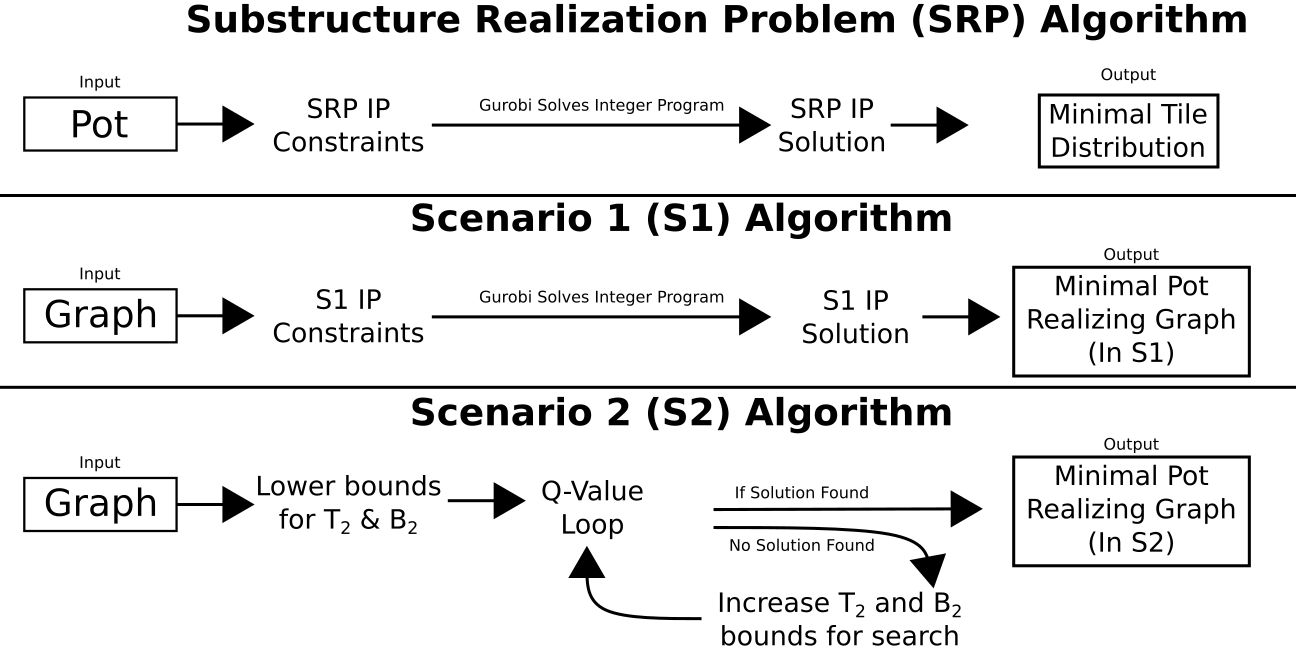}
    \caption{Overview of the three algorithms presented in this paper. The SRP algorithm is addressed in Section \ref{subsec:srp}, the S1 algorithm is addressed in Section \ref{subsec:s1_alg}, and the S2 algorithm in Section \ref{subsec:s2_alg}.}
    \label{fig:all_algo}
\end{figure}

\section{Methods}
\label{sec:methods}

\subsection{Graph Theoretical Formalism}
\label{subsec:gtf}
Throughout the paper, we will denote the set of nonnegative integers as $\ZZ^+$ and the set of nonnegative integers less than or equal to $k$ as $\ZZ_k$.

A discrete graph $G$ consists of a set $V=V(G)$ of vertices and a set $E=E(G)$ of edges together with a map $\mu:E \rightarrow V^{(2)}$  where $V^{(2)}$ is the set of (not necessarily distinct) unordered pairs of elements of $V$. If $\mu(e)=\{u,v\}$, then $u$ and $v$ are the vertices incident with edge $e$. We allow graphs to have loops and multiple edges, so that it is possible for $\mu(e) = \{u,v\}$ with $u=v$ (in the case of a loop edge) or for $\mu(e)=\mu(f)$ (in the case of multiple edges). We use the notation $\#$ to denote size or quantity, so that e.g. $\#V(G)$ will denote the order of $G$.

We denote a \emph{half-edge} of a vertex $v$ as $(v,e)$ if $v \in \mu(e)$. $H$ shall denote the set of half edges of $G$. 

For vertex $v_i$, we denote its degree (the number of incident edges to $v_i$) as $d_i$. For a graph $G$, we denote the maximum vertex degree as $\Delta(G)$ and the minimum vertex degree in $G$ as $\delta(G)$. We denote the neighborhood of vertex $v$, the set of all vertices adjacent to $v$, as $N_G(v)$. 

When two graphs $G_1$ and $G_2$ are isomorphic, we denote this as $G_1 \cong G_2$.

\subsection{Graph Assembly Model} \label{subsec:model}

To model the process of DNA self-assembly, we employ the graph-theoretical formalism described in  \cite{ellis2014minimal, ellis2019tile}. For the convenience of the reader, we include some of the relevant definitions in this section and introduce several new convenient tools for our purposes.

The $k$-armed branched junction molecule is the basic unit of this model. A \emph{k-armed branched junction molecule} is a star-shaped molecule whose arms are formed from strands of DNA (see left image in Figure \ref{fig:tile}). At the end of each of these arms is a region of unpaired bases, forming a \emph{cohesive-end} (also known informally as a \emph{sticky end}). We encode cohesive ends using symbols.

\begin{figure}[!ht]
\centering
\includegraphics[width=.8\linewidth]{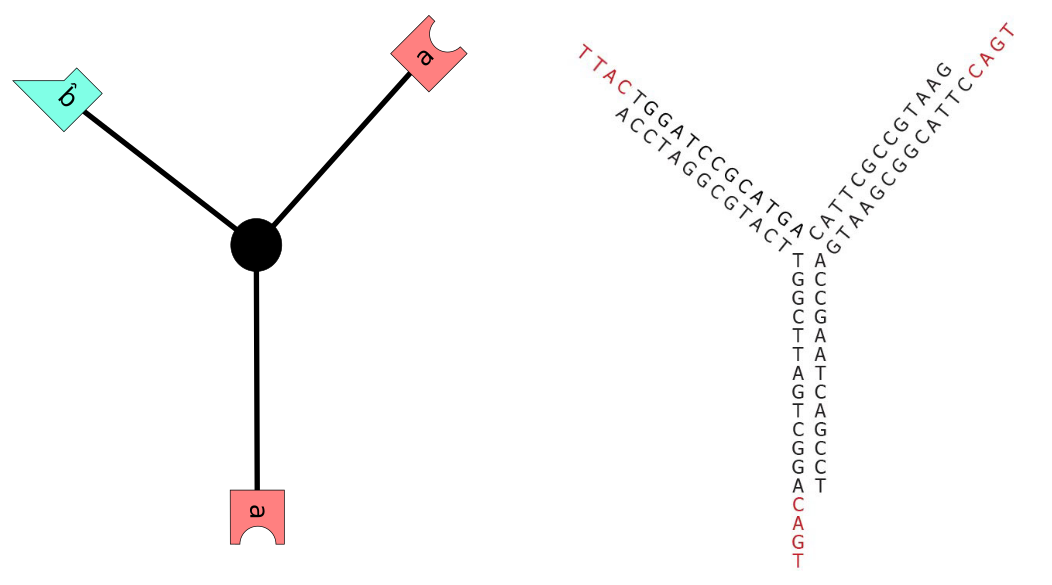}
\caption{The tile $t = \{a^2,\hat{b}\}$ (set notation) or $aaB$ (string notation), and its corresponding $k$-armed branched junction molecule}
\label{fig:tile}
\end{figure}

\begin{definition} \label{def:alphabet} A \emph{cohesive-end type} is an element of a finite set $S=\Sigma \cup \hat{\Sigma} $, where $\Sigma$ is the set of hatted symbols and $\hat{\Sigma}$ is the set of un-hatted symbols.  Each cohesive-end type corresponds to a distinct arrangement of bases forming a cohesive-end on the end of a branched junction molecule arm, such that a hatted and an un-hatted symbol, say $a$ and $\hat{a}$, correspond to complementary cohesive-ends.  We do not allow palindromic cohesive-ends,  so a cohesive-end corresponding to $a$ is complementary to but different from the cohesive-end corresponding to $\hat{a}$ for all $a\in \Sigma$. Moreover we use the convention that $\hat{\hat a} =a$.  \end{definition}

\begin{definition} A cohesive-end type joined to its complement forms a \emph{bond-edge type}, which we identify by the un-hatted symbol. That is, $\Sigma$ represents the set of all bond-edge types. \end{definition}

\begin{example} Cohesive-end types $a$ and $\hat{a}$ may join to form a bond-edge of type $a$. This process is shown in Figure \ref{fig:c_4realization}. \end{example}

Arms with complementary cohesive-ends can bond via Watson-Crick base pairing. In the simplest setting, the arms of a branched junction molecule are double stranded DNA with one strand extending beyond the other to form a cohesive region (see \cite{SK94}). In this case, the molecules are quite flexible. The mathematical model we employ here assumes there are no geometric restrictions, for example on the inter-arm angles or arm lengths. See \cite{ferrari2018} for a model that encompasses such geometric constraints.

In this graph theoretical framework, we encode a $k$-armed branched junction molecule and its un-bonded cohesive ends as follows.

\begin{definition} A $k$-armed branched junction molecule is represented by a vertex of degree $k$ with $k$ incident half-edges called a \emph{tile}. Thus,  a tile type \textit{t} is a classification of a flexible-armed branched junction molecule according to its set of cohesive-end types. The half-edges of a tile are labeled by the cohesive-end types corresponding to the cohesive-ends on the arms of the molecule the tile represents. The tile type of a tile is thus given by the multi-set of its cohesive-end types whose multiple entries of the same cohesive-end type are indicated by the exponent to the corresponding symbol. \end{definition} 

For simplicity of computation, throughout this paper we often write a tile as a lexicographically ordered string.

\begin{example} A tile of degree $\mathrm{3}$ with cohesive ends denoted as $a, a, $ and $\hat{b}$ is pictured in Figure \ref{fig:tile}. This tile would typically be written as $\{a^2,\hat{b}\}$, but here we use $aaB$, where $B$ is used in place of $\hat{b}$.
\end{example}

Tiles as we have defined them here are sometimes referred to as `flexible tiles' to avoid conflation with rigid tiles seen in Wang tilings or tile assembly models (TAM) (see \cite{CW2017}). Note that a self-assembled structure may contain several tiles of the same tile type. For greater mathematical simplicity, we assume that once a lab creates a particular tile, an unlimited number of tiles of that type is available to be used in the self-assembly process. To these ends, we define a working collection of tiles as the set of distinct tile types it contains.

\begin{definition} A \emph{pot} is a collection of tiles. More precisely, a \textit{pot} is given by its collection of tile types, and this is represented by a set $P = \{t_1,...,t_k\}$ where each $t_i$ is a tile type for some tile in $P$, and for all $a\in \Sigma$, if there is $i$ such that $a \in t_i$ then there is $j \in\{1,...,k\}$ such that $\hat{a} \in t_j$. We arbitrarily choose an ordering of the tiles, although all orderings represent the same pot. The set of bond-edge types that appear in tile types of $P$ is denoted with $\Sigma(P)$, and we write $\#P$ to denote the number of distinct tile types in $P$ and $\#\Sigma(P)$ to denote the number of distinct bond-edge types that appear in $P$. \end{definition}

\begin{definition} An \textit{assembly design} of a graph $G$ is a labeling $\lambda : G \rightarrow \Sigma \cup \hat{\Sigma}$ of the half-edges of $G$ with the elements of $\Sigma$ and $\hat{\Sigma}$ such that if $e \in E(G)$ and $\mu(e) = \{u,v\}$, then $\widehat{\lambda((v,e))} = \lambda((u,e))$. This means that each edge of $G$ receives both a hatted and an un-hatted version  of the same symbol, one on each of its half-edges. \label{def:assembly_design} \end{definition}

\begin{definition} Given an assembly design $\lambda$, the \textit{labelled degree} $d_{\sigma_i}(v)$ denotes the number of half edges incident to vertex $v$ which are labelled with cohesive-end type $\sigma_i$ (i.e. such that $\lambda((v,e)) = \sigma_i \in \Sigma \cup \hat{\Sigma})$ \label{def:ldeg}.
\end{definition}

\begin{definition} An \emph{assembling pot $P_{\lambda}(G)$} for a graph $G$ with assembly design $\lambda$ is the set $P_{\lambda}(G) = \{t_v \mid v \in V(G)\}$ where $t_v =\{ \lambda (v,e) \mid v \in \mu(e), e \in E(G)\}$.\end{definition}

\begin{definition} We say a pot $P$ \textit{realizes} the graph $G$ if there exists an assembly design $\lambda : H \rightarrow \Sigma \cup \hat{\Sigma}$ such that $P_{\lambda}(G) \subseteq P$. \label{def:realize} \end{definition} 

\begin{definition} The set of graphs with their associated assembly designs realized by a pot $P$, namely $\{(G,\lambda): P_{\lambda}(G) \subseteq P \}$, is called the \textit{output} of $P$ and is denoted by $\mathcal{O}(P)$. \end{definition}

\begin{definition}
    Given an an assembly design $\lambda$, its \textit{realization function} $f: V(G) \to P_\lambda(G)$ maps $v$ to the tile type used to label $v$, i.e.
    \begin{equation*}
        f(v) = \{\sigma_i^{d_{\sigma_i}(v)} : i \in \ZZ_N\} \cup \{ \hat{\sigma}_i^{d_{\sigma_i}(v)} : i \in \ZZ_N\},
        \label{eq:potfromorientation}
    \end{equation*}
    where $d_{\sigma_i}(v)$ denotes the number of half edges incident to $v$ which are labelled $\sigma_i$ (see Definition \ref{def:ldeg}). \label{def:relf}
\end{definition}

\begin{example}  Figure \ref{fig:K3digraph} shows an assembly design for $K_3$. Its realization function can be defined as $f(v_1) = \{a,b\}, f(v_2)=\{a, \hat{b}\}, f(v_3)=\{\hat{a}^2\}$. 

    \begin{figure}
        \centering
        \includegraphics[width=0.4\linewidth]{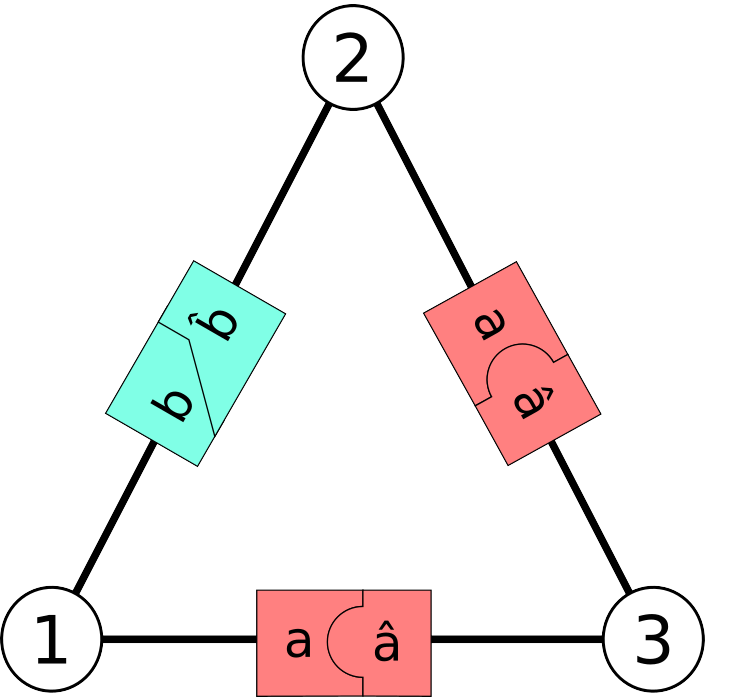}
        \caption{A visual representation of the assembly of $K_3$ with the pot $\{ab,\hat{b}a,\hat{a}^2\}$}
        \label{fig:K3digraph}
    \end{figure}
\end{example}

Within the flexible tile model of DNA self-assembly, we can consider the strategic design of pots that can assemble given graphs. An overarching goal is to find pots with the fewest numbers of tile and/or bond-edge types, and thus to theoretically determine the minimum number of branched junction molecules needed for self-assembly of certain DNA nanostructures. Given a target graph $G$, we seek minimal pots $P$ under consideration of three levels of restriction, which are the ``scenarios'' described in \cite{ellis2014minimal}. 
\begin{itemize}
    \item \emph{Scenario 1.} Least Restrictive: $G \in \mathcal{O}(P)$. Note: This allows the possibility that there exists $H \in \mathcal{O}(P)$ such that $\#V(H) < \#V(G)$. 
    \item \emph{Scenario 2.} Moderately Restrictive: $G \in \mathcal{O}(P)$ and for all $H \in \mathcal{O}(P)$, $\#V(H)\geq \#V(G)$. Note: This allows the possibility that there exists $H \in \mathcal{O}(P)$ such that $\#V(H)=\#V(G)$ but $H \not \cong G$. 
    \item \emph{Scenario 3.} Highly Restrictive: $G \in \mathcal{O}(P)$, and for all $H \in \mathcal{O}(P)$, $\#V(H)\geq\#V(G)$, and if $\#V(H)=\#V(G)$ then $H \cong G$.
\end{itemize} 

\begin{example}
    Consider the different ways of assembling the graph $K_4$ shown in Figure \ref{fig:k4_scenarios}. The left column shows a specific pot and how it realizes (self-assembles) $K_4$. This pot uses only one bond-edge type and is satisfactory in Scenario 1. At the bottom of the column is another, smaller order graph also realized by the same pot, which disqualifies the pot from Scenario 2. The middle column shows how another pot with three bond-edge types realizes $K_4$. The pot does not assemble into any smaller order graphs, so it is satisfactory for Scenario 2. At the bottom of the middle column is a graph non-isomorphic to $K_4$ with equal order to $K_4$, which disqualifies this pot from Scenario 3. Finally, the right column shows a realization of $K_4$ using a third pot. This pot cannot realize any graphs of smaller order or of equal order but non-isomorphic to $K_4$, so this pot is valid in Scenario 3.
    \begin{figure}
        \centering
        \includegraphics[width=1\linewidth]{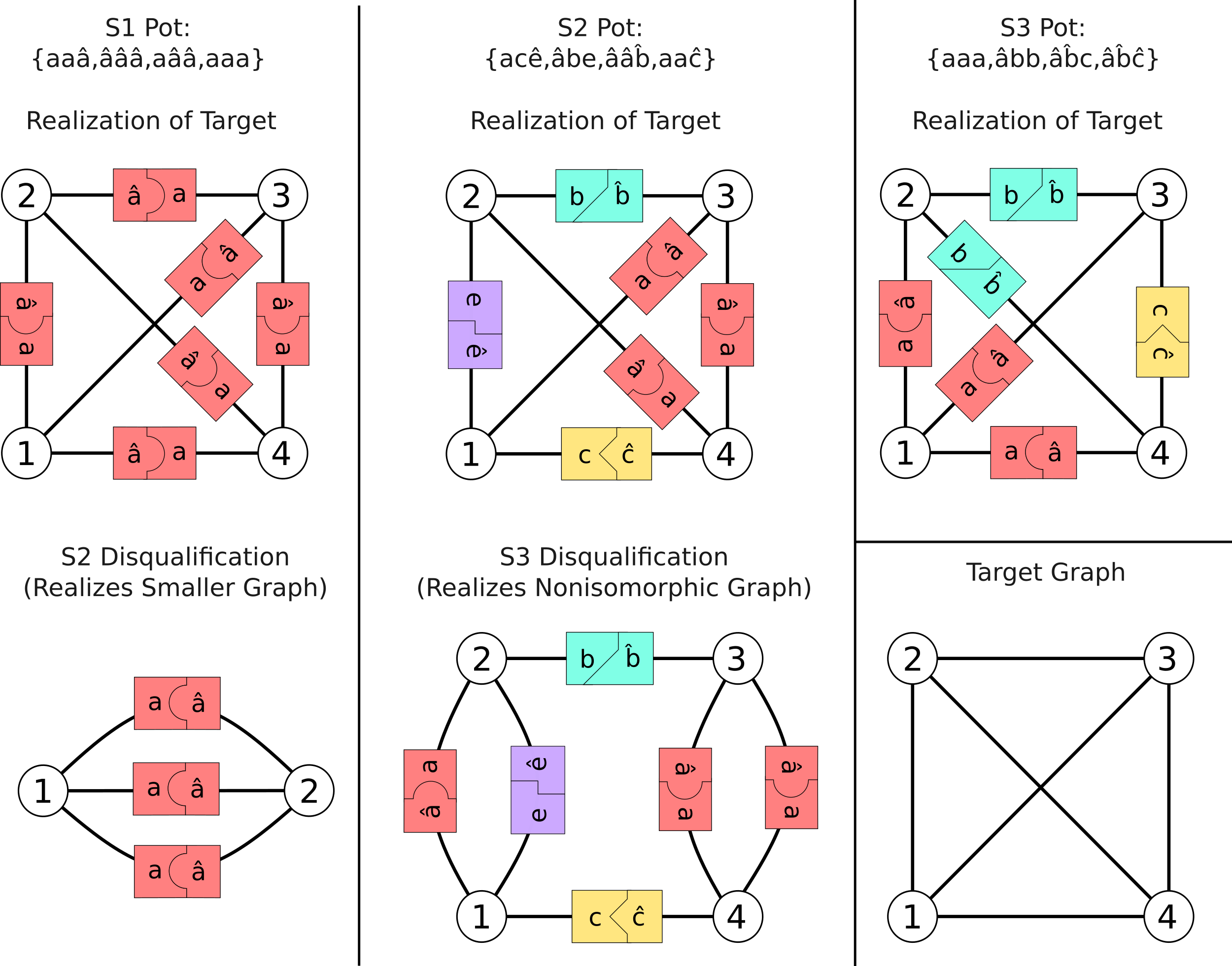}
        \caption{Illustration of three pots, each realizing $K_4$ in a different scenario, and examples of other graphs realized by those pots within the confines of the given scenario.}
        \label{fig:k4_scenarios}
    \end{figure}
\end{example}

Given the three theoretical laboratory scenarios, we can search for pots in each scenario that consist of the fewest tile types or fewest bond-edge types. This is analogous to seeking the smallest collections of molecules that will self-assemble into a given target nanostructure while also potentially avoiding byproduct structures, or waste. The following definitions provide notation for minimality with respect to tile types and bond-edge types.

\begin{definition}
$T_i(G) = \text{min}\{ \#P \mid P \text{ realizes } G \text{ according to Scenario } i\}$.
\end{definition}

\begin{definition}
$B_i(G) = \text{min}\{ \# \Sigma (P) \mid P \text{ realizes } G \text{ according to Scenario } i\}$.
\end{definition}

This paper focuses on finding optimal pots in Scenarios 1 and 2. This amounts to solving the following problem: 

\begin{decisionproblem}{Optimal Pot Problem (OPP)} Given a graph $G$ and scenario $S_i$, find pots with $T_i(G)$ tile types and/or $B_i(G)$ bond-edge types. \label{prob:OPP}\end{decisionproblem}

\begin{remark}
    It was shown in \cite{sorrels} that it is always possible to find a pot simultaneously achieving $T_1(G)$ and $B_1(G)$, but that graphs exist such that no pot will simultaneously achieve $T_3(G)$ and $B_3(G)$. It was conjectured that there existed no pot realizing the cube graph and simultaneously achieving $T_2(G)$ and $B_2(G)$, but this was disproved in \cite{ferrari}. The term ``biminimal pot" was coined in \cite{ferrari} to describe pots simultaneously achieving a minimum number of tile types and bond-edge types. The question of whether a biminimal pot satisfying Scenario 2 exists for every graph remained an open question until Toby Anderson, Olivia Greinke, Iskandar Nazhar, Luis Santori,   discovered that the $Y_{3,3}$ stacked prism graph does not have a biminimal Scenario 2 pot (pending citation). Thus, for Scenario 2 we present an algorithm to search for pots that are minimal with respect to either the number of tile types and the number of bond-edge types, with the knowledge that there might exist two distinct pots achieving these values.
\end{remark}

From Scenario 2 (and Scenario 3) arises two problems that must be grappled with when attempting to design an assembling pot: (1) determining if a given pot will realize a graph of a given order, (2) determining whether a pot that realizes a given graph can also realize any smaller graph. These two problems and their computational complexity were studied in \cite{sorrels}. Below we reproduce the formulations of these problems.

\begin{decisionproblem}{Pot Realization Problem (PRP)}
 Given a pot $P$ and a positive integer $k$, does $P$ realize a graph $G$ with $k$ vertices? \label{prob:PRP}
\end{decisionproblem}

\begin{decisionproblem}{Substructure Realization Problem (SRP)}
 Given an assembling pot $P_{\lambda}(H)$ for a graph $H$ of order $n$ and an integer $\ell$ with $0<\ell<n$, does $P_{\lambda}(H)$ realize a graph $F$ with $\ell$ vertices? \label{prob:SRP}
\end{decisionproblem}

\subsection{SRP Integer Program}
\label{subsec:srp}
Recall that in order to determine whether a pot $P$ is valid for a given target graph $G$ in Scenarios 2 or 3,  it must be known whether any graphs of order smaller than $\#V(G)$ are in $\mathcal{O}(P)$. To begin, we follow \cite{ellis2014minimal} and \cite{sorrels} to describe the original framework for the SRP and the challenges presented by certain cases. In \cite{ellis2014minimal} the augmented matrix of a linear system encoding some constraints on an assembling pot $P_{\lambda}(G)$ for a given graph $G$ is introduced as a first-line computational approach for determining whether $P$ is valid in Scenario 2. 

The following definitions and propositions relating to the \textit{construction matrix} are given in \cite{ellis2014minimal}.

\begin{definition}\label{def:matrix} Let $P = \{ t_1,...,t_p \}$ be a pot and let $z_{i,j}$ denote the net number of cohesive-ends of type $a_i$ on tile $t_j$, where un-hatted cohesive-ends are counted positively and hatted cohesive-ends are counted negatively. Then the following system of equations must be satisfied by any connected graph in $\mathcal{O}(P)$:
\begin{eqnarray*} z_{1,1}r_1+z_{1,2}r_2+...+z_{1,p}r_p &=& 0 \\
& \vdots & \\
z_{m,1}r_1+z_{m,2}r_2+...+z_{m,p}r_p &=& 0 \\
r_1 + r_2 + ... + r_p &=& 1 \end{eqnarray*}

The \emph{construction matrix} of $P$, denoted $M(P)$, is the corresponding augmented matrix:

\begin{equation*}
M(P) = \begin{bmatrix} \begin{array}{*{20}{cccc|c}}
   {{z_{1,1}}} & {{z_{1,2}}} &  \ldots  & {{z_{1,p}}} & 0  \\
    \vdots  &  \vdots  & {\ddots} &  \vdots  & {} \vdots \\
   {{z_{m,1}}} & {{z_{m,2}}} &  \ldots  & {{z_{m,p}}} & 0  \\
1 & 1 &  \ldots  & 1 & 1  \\
 \end{array} \end{bmatrix}. 
\end{equation*}
\end{definition}

\begin{definition} The solution space of the construction matrix of a pot $P$ is called the {\it spectrum} of $P$, and is denoted  $\mathcal S(P)$. \end{definition}

The construction matrix is used primarily to determine whether there exists in $\mathcal{O}(P)$ a graph of smaller order than the desired target graph. For a given solution $\langle r_1,...,r_p \rangle$ to $M(P)$, the least common denominator of the $r_i$'s gives the smallest order of a graph in $\mathcal{O}(P)$. This is detailed in the following result from \cite{ellis2014minimal}.

\begin{theorem}\label{thm:spectrum}
Let $P=\{t_1, \ldots , t_p\}$ be a pot.  Then:

\begin{enumerate}
    \item If a graph $G$ of order $n$ is realized by $P$ using $R_j$ tiles of type $t_j$, then $\frac{1}{n}\langle R_1, \ldots, R_p\rangle$ is a solution of the construction matrix $M(P)$, i.e. is in $\mathcal{S}(P)$.
    \item If $\langle r_1, \ldots, r_p \rangle \in \mathcal{S}(P)$, and $n$ is a positive integer such that $nr_j \in \mathbb{Z}_{\geq 0}$ for all $j$, then there is a graph of order $n$ in $\mathcal{O}(P)$ that is realized by using $nr_j$ tiles of type $t_j$. 
    \item  The smallest order of a graph in $\mathcal{O}(P)$ is $min \{lcm \{b_j|r_j \neq 0 \text{ and } r_j = a_j/b_j\}\}$ where the minimum is taken over all $\langle r_1,\ldots,r_p\rangle \in \mathcal{S}(P)$ such that $r_j \geq 0$ and $a_j/b_j$ is in reduced form for all $j$.
\end{enumerate}
\end{theorem}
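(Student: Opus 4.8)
The plan is to handle the three parts in order, since the third follows fairly directly from the first two. For Part 1, I would argue by counting cohesive-ends. Suppose $G$ has order $n$ and is realized by $P$ via an assembly design $\lambda$ using $R_j$ vertices of tile type $t_j$. The equation $r_1+\cdots+r_p=1$ is immediate: the $R_j$ partition the $n$ vertices by tile type, so $\sum_j R_j = n$ and dividing by $n$ gives $\sum_j R_j/n = 1$. For each bond-edge type $a_i$, I would observe that, by Definition \ref{def:assembly_design}, every edge labeled $a_i$ contributes exactly one un-hatted end $a_i$ and one hatted end $\hat a_i$ to its two half-edges; summing over all such edges, the total number of un-hatted $a_i$-ends in $G$ equals the total number of hatted ones, so the signed total $\sum_j R_j z_{i,j}$ vanishes. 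Dividing by $n$ yields $\sum_j z_{i,j}(R_j/n)=0$, so $\tfrac1n\langle R_1,\dots,R_p\rangle$ satisfies every row of $M(P)$ and lies in $\mathcal{S}(P)$.

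For Part 2, I would reverse this counting into a construction, which I expect to be the crux of the argument. Given $\langle r_1,\dots,r_p\rangle\in\mathcal{S}(P)$ with $R_j := nr_j\in\ZZ_{\geq 0}$, I would first take a vertex set consisting of $R_j$ vertices assigned tile type $t_j$, each carrying half-edges pre-labeled by the cohesive-ends of $t_j$; the normalization row guarantees this set has exactly $n$ vertices. The key point is that the remaining rows force, for every bond-edge type $a_i$, the number of un-hatted $a_i$-labeled half-edges across all vertices to equal the number of hatted ones, since their difference is precisely $\sum_j R_j z_{i,j}=n\sum_j z_{i,j}r_j=0$. I would then build $G$ by choosing, for each $a_i$, any bijection between the un-hatted and hatted $a_i$-half-edges and declaring each matched pair an edge. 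By construction every half-edge is consumed exactly once and every edge joins complementary ends, so $\lambda$ is a genuine assembly design; moreover the tile type realized at each vertex is the $t_j$ it was assigned, so $P_\lambda(G)\subseteq P$ and $G\in\mathcal{O}(P)$ has order $n$ and uses $R_j$ tiles of type $t_j$.

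Finally, Part 3 I would deduce from the first two by a divisibility argument. For the lower bound, any graph of order $n$ in $\mathcal{O}(P)$ produces, via Part 1, a nonnegative solution $\langle R_1/n,\dots,R_p/n\rangle$; writing each nonzero entry in reduced form $a_j/b_j$ forces $b_j\mid n$, hence $\mathrm{lcm}\{b_j\}\mid n$ and in particular $n\ge \mathrm{lcm}\{b_j\}$, so the smallest order is at least the stated minimum. For the matching upper bound, I would take a nonnegative solution attaining that minimum, set $n_0=\mathrm{lcm}\{b_j : r_j\neq 0\}$, note $n_0 r_j\in\ZZ_{\geq 0}$ because each $b_j\mid n_0$, and invoke Part 2 to produce a graph of order exactly $n_0$ in $\mathcal{O}(P)$.

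The main obstacle is the construction in Part 2: one must check that the half-edge matching yields a legitimate labeled graph realized by $P$ rather than merely a combinatorial count, that every half-edge is used exactly once, and that the realized tile types genuinely lie in $P$. The counting itself is routine; the care lies in the half-edge bookkeeping and in confirming the resulting labeling meets Definition \ref{def:assembly_design}. One may optionally remark that the matching can be chosen to make $G$ connected, but since connectivity is not demanded by the definition of $\mathcal{O}(P)$, it is not needed here.
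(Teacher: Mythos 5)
Your proposal is correct, and it reconstructs essentially the standard argument for this result: the paper itself states Theorem \ref{thm:spectrum} without proof, citing \cite{ellis2014minimal}, and the proof given there is exactly your counting argument for Part 1, the arbitrary perfect matching of complementary half-edge labels for Part 2 (legitimate here precisely because loops and multi-edges are permitted, as you note), and the divisibility/lcm squeeze for Part 3. The only flaw is your optional closing aside that the matching ``can be chosen to make $G$ connected'': this is false in general --- for $P=\{\{a\},\{\hat a\}\}$ and $n=4$, every graph realized with two tiles of each type has all vertices of degree one and is necessarily a disjoint union of two edges --- but since you correctly observe that connectivity is not required by the definition of $\mathcal{O}(P)$, this does not affect the validity of the proof.
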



In the cases where $\mathcal{S}(P)$ consists of a unique solution, the construction matrix quickly provides a solution to the SRP. However, if the system of equations defined by the construction matrix does not have a unique solution, this approach does not give an immediate answer to the SRP \cite{sorrels}. The spectrum $\mathcal{S}(P)$ will have degrees of freedom whenever $\#P > \#\Sigma(P) + 1$, as then the dimensions of $M(P)$ will be $(\#\Sigma(P) + 1) \times \#P$.  This occurs even in examples of quite simple target graphs and small pots, as we demonstrate in the following example.

\begin{example}\label{ex:p3} The pot $P = \{\{a\},\{\hat{a}\},\{a,\hat{a}\}\}$ realizes the graph $P_3$, the path graph of order 3. Since $\#P = 3 > 2 = \#\Sigma(P) + 1$ (i.e. $M(P)$, shown below, has two more columns than rows), the system has one degree of freedom. 


    \[ M(P) = 
    \begin{bmatrix}
    1 & -1 & 0 & 0 \\
    1 & 1 & 1 & 1
    \end{bmatrix}
    \]

    It is easy to verify,

    \begin{equation*} \mathcal{S}(P) = \left\{ \frac{1}{2r} \langle r-t, r-t, 2t \rangle \middle| \; r \in \mathbb{Z}^+,  t \in \left(\mathbb{Z}\cap [0,r]\right) \right\}. \end{equation*}
    
    Here, $r$ is a scaling factor and $t$ represents the free variable. In this instance, there exist multiple $H \in \mathcal{O}(P)$ such that $\#V(H) < \#V(G)$.  A graph of order one, a self-loop with the tile $\{a,\hat{a}\}$, can be realized with the solution corresponding to $r=t$. The path graph $P_2$ can be realized (using tiles $\{a\}$ and $\{\hat{a}\}$) with the solution corresponding to $t=0$. Note that the target graph, $P_3$, is realized in accordance with $r=3, t=1$. 
\end{example}

 As can be observed in Example \ref{ex:p3}, when $\mathcal{S}(P)$ has degrees of freedom, one must check all valid values of the free variable(s) in order to determine if there exist smaller order graphs in $\mathcal{O}(P)$. In cases of larger graphs and pots, this becomes tedious and time-consuming. In addition to these difficulties, it was shown in \cite{sorrels} that an optimal pot in Scenario 2 might correspond to multiple distinct solutions (arising from distinct values of the free variable(s)) of $M(P)$.  

In \cite{sorrels}, an hmhm which checks all possible tile distributions with non-negative integer tile quantities was proposed to handle cases where $\mathcal{S}(P)$ has degrees of freedom. In theory, the algorithm can solve the SRP for systems with any number of free variables. However, in cases with more than two free variables, the algorithm presents inhibitory runtime costs. Thus, the algorithm is only implemented for pots with one or two free variables. 

Instead, we reframe the SRP as an integer program. By using this program, we can efficiently determine the smallest order graph realized by a given pot without trying to find a solution to the construction matrix, allowing us to solve the SRP no matter how many free variables the spectrum of the pot has. 

Notice that the $R_j$ in the first part of Theorem \ref{thm:spectrum} describe the number of tiles used in a specific assembly design of $G$.

\begin{definition}
    Given a pot $P$, a \textit{tile distribution} is a tuple $\Omega(P) = (R_1, \cdots R_{\#P})$, where $R_i$ represents the number of tiles of type $t_i$.
\end{definition} 

Note that if $\Omega(P)$ describes the multi-set of tiles used to build a graph $G$, then $$\displaystyle\sum_{i=1}^{\#P}R_i = \#V(G).$$

\begin{remark}
    Let $\Omega(P) = (R_1,...,R_{\#P})$ be a tile distribution of a given pot $P = \{t_1, ... t_p\}$. Then $\Omega(P)$ corresponds to some $\langle r_1,...,r_p\rangle \in \mathcal{S}(P)$ such that
    \begin{align*}
        r_i = \displaystyle\frac{R_i}{\displaystyle\sum_{k=1}^{\#P}R_k}.
    \end{align*}
\end{remark}

\begin{figure}
        \centering
\includegraphics[width=4in]{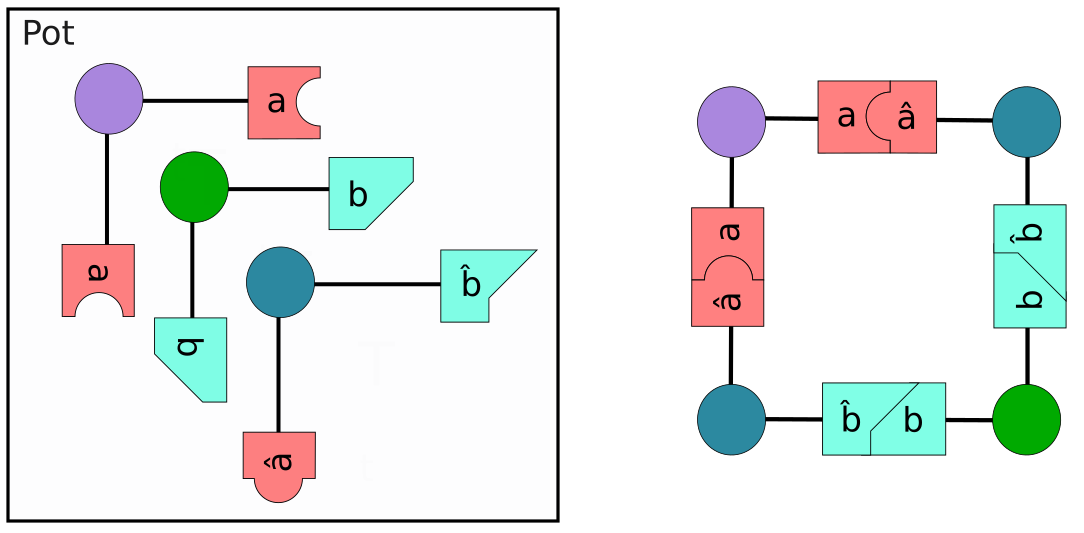}
\caption{Optimal Scenario 2 Construction of $C_4$}
\label{fig:C4RealizationS2}
\end{figure}

\begin{example}
    The tile distribution associated with the realization of $C_4$ by the pot $P = \{aa, \hat{a}\hat{b}, bb\}$ as shown in Figure \ref{fig:C4RealizationS2} is $\Omega(P) = (1, 2, 1)$.
\end{example}

Using tile distributions, the SRP can be reformulated as follows:

\begin{decisionproblem}{Substructure Realization Problem (SRP)} Given a graph $G$ of order $N$ and a pot $P$ which realizes $G$, does $P$ realize a graph of order less than $N$ with some tile distribution $\Omega(P)$? 
\end{decisionproblem}

The following lemma presents the SRP in terms of tile distributions, and thus as integer-valued equations.

\begin{lemma}\label{lemma:P_realize}
    Given a pot $P = \{t_1, \cdots, t_n\}$, $P$ realizes a graph G of order $N$ if and only if there exists a tile distribution $\Omega(P) = (R_1, \cdots, R_n)$ such that
    \begin{center}
    $\displaystyle\sum_{j=1}^nR_j = \#V(G)$ \quad and \quad$\displaystyle\sum_{j=1}^nR_jz_{i,j} = 0$ for each $i \in \{1, \dots, |\Sigma| \}.$
    \end{center} 

\end{lemma}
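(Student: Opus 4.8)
The plan is to prove both directions of the biconditional by connecting a tile distribution $\Omega(P) = (R_1, \dots, R_n)$ to the structure of an assembly design via the spectrum result already established in Theorem~\ref{thm:spectrum}. The two stated equations have clear meanings: $\sum_j R_j = \#V(G)$ is the observation recorded immediately after the definition of tile distribution (the total number of tiles equals the number of vertices), and $\sum_j R_j z_{i,j} = 0$ is precisely the $i$-th homogeneous row of the construction matrix $M(P)$ scaled by $N = \#V(G)$. So the lemma is essentially a restatement of Theorem~\ref{thm:spectrum} in integer form, and my proof would exploit that correspondence rather than reprove everything from scratch.

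For the forward direction, suppose $P$ realizes a graph $G$ of order $N$. Then there is an assembly design $\lambda$ with $P_\lambda(G) \subseteq P$; let $R_j$ be the number of vertices of $G$ assigned tile type $t_j$ under the realization function $f$. First I would note $\sum_j R_j = N = \#V(G)$ since every vertex receives exactly one tile type. For the second family of equations, I would invoke part~(1) of Theorem~\ref{thm:spectrum}: the tuple $\frac{1}{N}\langle R_1, \dots, R_n\rangle$ lies in $\mathcal{S}(P)$, meaning it satisfies every homogeneous row of $M(P)$. Multiplying the $i$-th such equation through by $N$ clears the denominator and yields $\sum_j R_j z_{i,j} = 0$ for each bond-edge type $a_i$. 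This gives the desired tile distribution.

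For the reverse direction, suppose a tile distribution $\Omega(P) = (R_1, \dots, R_n)$ satisfies the two conditions. Setting $r_j = R_j / \sum_k R_k$ as in the Remark, the equations $\sum_j R_j z_{i,j} = 0$ divide through to give $\sum_j r_j z_{i,j} = 0$, and $\sum_j r_j = 1$ automatically; hence $\langle r_1, \dots, r_n\rangle \in \mathcal{S}(P)$. Now apply part~(2) of Theorem~\ref{thm:spectrum} with $n = N = \sum_k R_k$: since $N r_j = R_j \in \mathbb{Z}_{\geq 0}$ for all $j$, there exists a graph of order $N$ in $\mathcal{O}(P)$ realized using exactly $N r_j = R_j$ tiles of type $t_j$. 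In particular $P$ realizes a graph of order $\#V(G) = N$, which is what we need.

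The only real subtlety, and the step I would be most careful about, is the interpretation of the second equation's index range. The lemma writes $i \in \{1, \dots, |\Sigma|\}$, whereas $M(P)$ as defined ranges over the bond-edge types actually appearing in $P$, i.e.\ over $\Sigma(P)$; a cohesive-end type $a_i$ not appearing in any tile contributes $z_{i,j} = 0$ for all $j$, so its equation is vacuous and including or excluding it is harmless. I would remark on this to keep the correspondence between the $z_{i,j}$ here and the rows of the construction matrix unambiguous. Beyond that, both directions are essentially immediate translations between the rational-spectrum language of Theorem~\ref{thm:spectrum} and the integer-tile-count language of $\Omega(P)$, so no genuinely hard combinatorial work remains once that dictionary is set up.
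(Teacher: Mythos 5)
Your proof is correct, and it takes the same route the paper intends: the paper states this lemma without an explicit proof, immediately after the remark converting tile distributions $(R_1,\dots,R_n)$ into spectrum solutions $r_j = R_j/\sum_k R_k$, so your argument via parts (1) and (2) of Theorem~\ref{thm:spectrum} (clearing the denominator $N$ in one direction, applying part (2) with $n = \sum_k R_k$ in the other) is exactly the derivation the surrounding text implies. Your two flagged subtleties are also handled correctly --- the index set $\{1,\dots,|\Sigma|\}$ versus $\Sigma(P)$ is harmless since absent bond-edge types give vacuous equations, and the backward direction must be read existentially (the conditions guarantee $P$ realizes \emph{some} graph of order $N$, not the specific $G$), which matches the lemma's intended use in reformulating the SRP.
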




Finding the smallest graph that can be realized by a pot amounts to minimizing $\sum_{i=1}^nR_i$. Thus, we can see that the SRP is equivalent to the following integer programming problem.
\begin{equation}\label{lp:srp} 
\begin{aligned}  
    \min &\sum_{i=1}^nR_i \\
    \text{subject to}: &\sum_{j=1}^nR_jz_{i,j} = 0 \quad \forall i \in \ZZ_{\#P} 
\end{aligned}
\end{equation}


Section \ref{sec:results} details the computational tools we use to solve this integer program as well as example use cases. 

\subsection{Optimal Pot Problem Integer Program}
\label{subsec:lp_opp}
Now we turn to solving the Optimal Pot Problem in Scenarios 1 and 2. The primary roadblock is the PRP (\ref{prob:PRP}), i.e. being able to check whether a given pot realizes a given graph efficiently. This problem is known to be NP-hard, and since it will need to be answered for every possible pot of each size (of which there are combinatorially many), the number of necessary computations quickly becomes unreasonable. 

In this section, we reformulate the Optimal Pot Problem in Scenario 1 as an integer program. The overall approach will be to efficiently search the space of assembly designs of a given target graph $G$.

Recall that pot $P$ realizes a graph if there exists an assembly design with assembling pot containing the tiles of $P$. Since we only care about minimal pots, we can restrict our attention to assembly designs whose assembly pots are equal to $P$. In other words, a pot $P$ realizes a graph via assembly design $\lambda$ if its realization function $f$ (Definition \ref{def:relf}) is surjective; i.e. $\im f = P$.

To solve the OPP, we will express the constraint $\im f = P$ as a series of linear inequalities (see Definition \ref{def:PCC}). To do this, we must have some way of describing an assembly design $\lambda$ as a fixed length integer vector. This vector will depend on the maximum possible size of a minimal pot realizing $G$, which we denote as $\theta$, and the maximum possible number of bond-edge types in a minimal pot realizing $G$, which we denote as $\beta$. In an extension of the notation from Definition \ref{def:alphabet}, we define $\Sigma_\beta = \{\sigma_1, \sigma_2, \cdots \sigma_{\beta} \}$ and $\hat{\Sigma}_\beta = \{\hat{\sigma}_1, \hat{\sigma}_2, \cdots \hat{\sigma}_{\beta} \}$, so that $\Sigma_\beta \cup \hat{\Sigma}_\beta$ will give all the half-edge labels we are considering.  

\begin{definition}\label{def:decvar}
   Given an assembly design $\lambda$ of $G$ with realization function $f$ such that $\im f = P = \{t_1, \cdots, t_N\}$ for $N \leq \theta$, $Var(\lambda)$ is the following collection of decision variables describing $\lambda$:

\begin{itemize}
    \item $t_{n,a} \in \ZZ^+$: the number of half-edges of type $a$ on $t_n$
    \item $w_{i,a} \in \ZZ^+$: the number of half-edges of type $a$ on xx $v_i$, or $d_{\sigma_i}(v)$
    \item $x_{i,n} \in \{0, 1\}$: 1 if $f(v_i) = t_n$ ($v_i$ uses tile type $t_n$), and 0 otherwise 
    \item $k_{n} \in \{0, 1\}$: 0 if $t_n$ is an ``empty" tile. This allows us to represent pots with less than $\theta$ tiles. 
    \item $e_{i,j,a} \in \{0, 1\}$: 1 if  $\lambda((v_i, \{v_i, v_j\})) = a$ (i.e. if the half-edge $(v_i, \{v_i, v_j\})$ is labeled with cohesive-end type $a$), 0 otherwise. 
    \end{itemize}
\end{definition}

Thus, to search over the space of labeled orientations of a given graph $G$, our integer program will search over the space of tuples: \[C = \{(t_{n,a}, w_{i,a}, x_{i,n}, k_{n}, e_{i,j,a}): a\in \Sigma_{\beta} \cup \widehat{\Sigma}_{\beta}, i,j \in \ZZ_{N}, n \in \ZZ_{\theta}\}.\]
Each element of $C$ can be encoded as an integer-valued vector with non-negative entries.

\begin{example}
    Consider the realization of $L=O(C_4)$ using the pot $P = \{ t_1=\{a^2\}, t_2=\{\hat{a}^2\} \}$ pictured in Figure \ref{fig:c_4realization}. The $k_i$ and $t_{i,\sigma}$ decision variables and values are shown on the left for each tile in the pot. Select $e_{i,j,\sigma}$ decision variables are shown on the right, although most are omitted as every edge has 2 sets of variables for each $\sigma \in \Sigma\cup \hat{\Sigma}$. In the center, the $x_{i,j}$ decision variables are shown for $i=1$ and $i=3$, and arrows demonstrate how these decision variables control the mapping of tiles onto vertices.
    
    \begin{figure}
        \centering
        \includegraphics[width=1\linewidth]{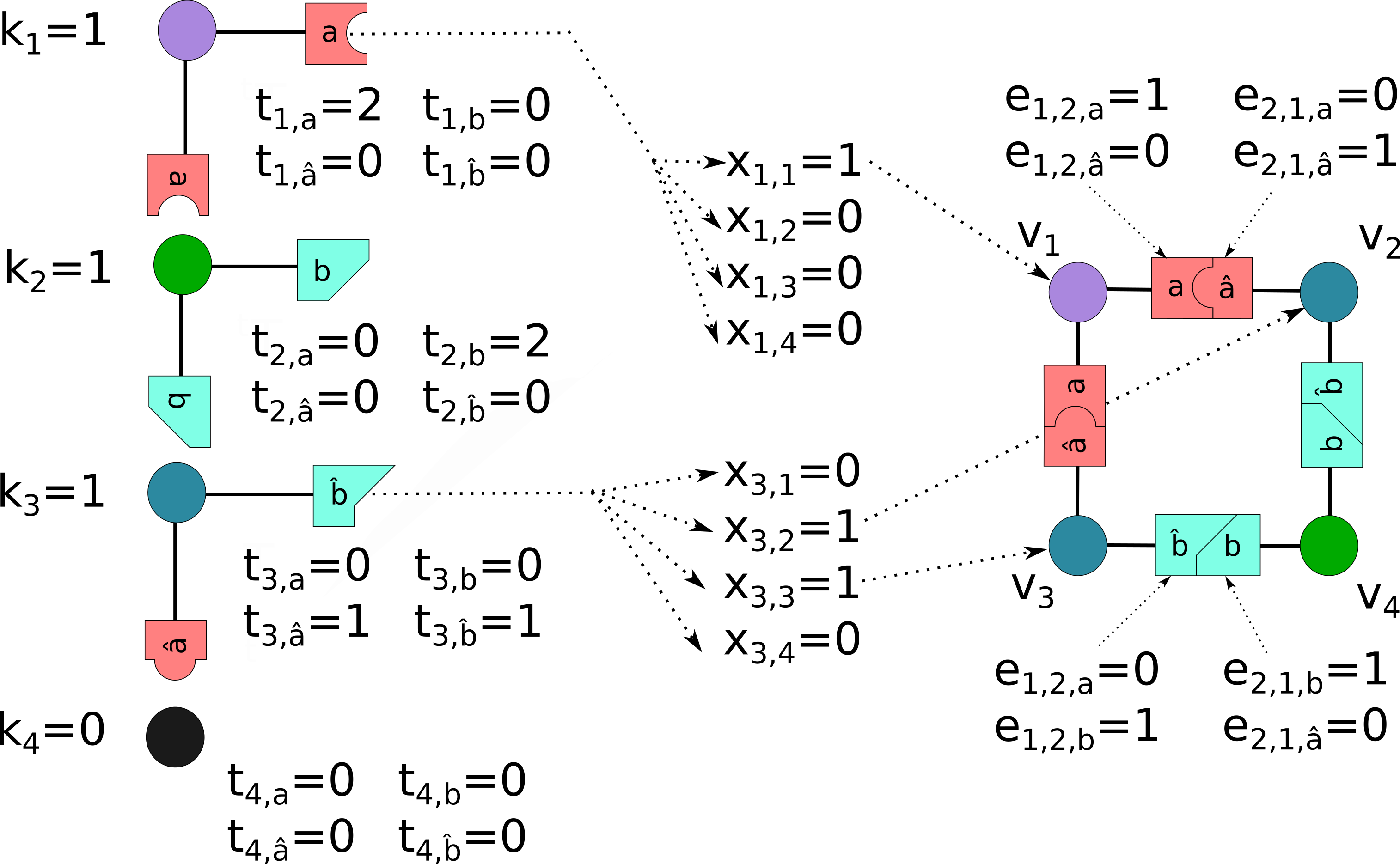}
        \caption{Example realization of $C_4$ with values of decision variables as given in Definition \ref{def:decvar}.}
        \label{fig:c_4realization}
    \end{figure}
\end{example}

The following definition describes several linear constraints encoding the PRP in terms of the decision variables defined above.

\begin{definition} \label{def:PCC}
    The following will be referred to as the \textit{Pot Construction Constraints} on $\beta$ bond-edge types, denoted $PCC(\beta)$. The constraints describe pots realizing a graph $G$ with at most $\beta$ bond-edge types. Here, $(X) \to Y$ denotes that the constraint $Y$ must be satisfied only if the Boolean expression $X$ is true. 
\begin{enumerate}[label=C\arabic*:]
    \item 
    The net number of cohesive-end types is zero 
    \begin{equation*}
        \sum_{i=1}^{N}w_{i,\sigma}-w_{i,\hat{\sigma}}=0\qquad \forall \sigma \in \Sigma_{\beta}
    \end{equation*}
    \item 
    The total number of cohesive-end types on a vertex (i.e. its number of half-edges) is equal to the degree of the vertex:
    \begin{equation*}
        \sum_{\sigma \in \Sigma_{\beta}\cup \widehat{\Sigma}_{\beta}}w_{i,\sigma}=d_i \qquad \forall i\in \mathbb{Z}_{N}
    \end{equation*}
    \item Each vertex is assigned exactly one tile type:
    \begin{equation*}
        \sum_{n=1}^{N}x_{i,n}=1\qquad \forall i\in \mathbb{Z}_{N}
    \end{equation*}
    \item Tile selections are enforced:
    \begin{equation*}
        (x_{i,n}=1)\to w_{i,\sigma}-t_{n,\sigma} \leq 0\qquad \forall i,n\in \mathbb{Z}_{N}, \sigma \in \Sigma_{\beta}\cup \widehat{\Sigma}_{\beta}
    \end{equation*}
    \item Each tile is assigned to a vertex of degree equal to the number of half-edges on the tile (indicator):
    \begin{equation*}
        (x_{i,n}=1)\to \sum_{\sigma \in \Sigma_{\beta}\cup \widehat{\Sigma}_{\beta}}w_{i,\sigma}-t_{n,\sigma}=0 \qquad \forall i,n\in \mathbb{Z}_{N}
    \end{equation*}
     \item $k_n$ decision variables are non-zero if and only if tile $n$ is used (recall $\Delta(G)$ is the maximum degree of a vertex of $G$):
    \begin{equation*}
        -\Delta(G) k_{n}+\sum_{\sigma \in \Sigma_{\beta}\cup \widehat{\Sigma}_{\beta}}t_{n,\sigma}\leq 0\qquad \forall n\in \mathbb{Z}_{N}
    \end{equation*}
    This constraint effectively enforces the following 2 conditions:
    
    1. If tile $t_n$ is empty, $k_n$ must be 0, because the total number of cohesive ends on tile $n$  is 0.
    
    2. If tile $n$ is non-empty, the equation ensures that we get less than 0 on the left-hand side of the inequality for every vertex since the sum cannot be greater than $\Delta$.  
    \item Edge decision variables count the number of incident half-edges of each cohesive end-type:
    \begin{equation*}
        w_{i,\sigma}-\sum_{v_j\in N_G(v_i)}e_{i,j,\sigma}=0\qquad \forall i\in \mathbb{Z}_{N}, \sigma \in \Sigma_{\beta}\cup \widehat{\Sigma}_{\beta}
    \end{equation*}
    \item 
    Each edge is labeled with exactly two cohesive-end types:
    \begin{equation*}
        \sum_{\sigma \in \Sigma_{\beta}\cup \widehat{\Sigma}_{\beta}}e_{i,j,\sigma}+e_{j,i,\sigma}=2\qquad \forall (v_i,v_j)\in E(O(G))
    \end{equation*}
    \item 
    Each edge consists of two half-edges labeled with a hatted and unhatted version of the same bond-edge type:
    \begin{equation*}
        e_{i,j,\sigma}-e_{j,i,\hat{\sigma}}=0 \qquad \forall (v_i,v_j)\in E(G), \sigma \in \Sigma_{\beta}\cup \widehat{\Sigma}_{\beta}
    \end{equation*}
\end{enumerate}
\end{definition}

\begin{remark}
    The constraints are written to allow integer programming solvers as much flexibility as possible in finding the optimal solution, and as such may be less specific than necessary.
\end{remark} 

The following theorem proves that the $PCC$ constraints are equivalent to the constraint that for a given pot $P$ and a given realization function $f$, $\im f = P$, or in other words, that $P$ realizes $G$ via $f$. 

\begin{theorem}
    A pot $P = \{t_1, \cdots t_N\}$ realizes a graph $G$ using $\beta$ bond-edge types if and only $Var(\lambda)$ satisfies $PCC(\beta)$.\label{thm:PCC}
\end{theorem}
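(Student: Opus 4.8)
The statement is a biconditional, so the plan is to prove the two implications separately, viewing $PCC(\beta)$ as the specification of a feasible integer vector and setting up an explicit correspondence between such vectors and assembly designs $\lambda$ of $G$ whose realization function satisfies $\im f = P$, where every half-edge label is drawn from $\Sigma_\beta \cup \widehat\Sigma_\beta$. Throughout, $v_1,\dots,v_N$ are the vertices of $G$ and $t_1,\dots,t_N$ the (at most $N$) candidate tile types.

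For the forward direction, suppose $P$ realizes $G$ via $\lambda$. I read off $Var(\lambda)$ directly from the definitions: put $e_{i,j,a}=1$ exactly when $\lambda((v_i,\{v_i,v_j\}))=a$; let $w_{i,a}$ be the labelled degree $d_a(v_i)$ of Definition \ref{def:ldeg}; set $x_{i,n}=1$ exactly when $f(v_i)=t_n$; let $t_{n,a}$ be the multiplicity of $a$ in $t_n$; and set $k_n=1$ iff $t_n$ is nonempty. Each constraint is then a transcription of a defining property: C8 and C9 restate that, by Definition \ref{def:assembly_design}, every edge carries a complementary hatted/unhatted pair with one label per half-edge; C7 and C2 hold because $w_{i,a}$ counts half-edges at $v_i$; C1 holds because each bond-edge type occurs on equally many hatted and unhatted half-edges globally; C3 expresses that $f$ is a function; C4 and C5 follow because $f(v_i)=t_n$ forces the half-edge multiset at $v_i$ to equal $t_n$, i.e. $w_{i,a}=t_{n,a}$; and C6 holds by the choice of $k_n$ together with $\sum_a t_{n,a}\le\Delta(G)$.

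For the backward direction, suppose $Var$ satisfies $PCC(\beta)$. I define $\lambda$ by labeling $(v_i,\{v_i,v_j\})$ with the symbol $a$ for which $e_{i,j,a}=1$. The first thing to verify, and the crux of this direction, is that $\lambda$ is well defined, i.e. exactly one $e_{i,j,a}$ equals $1$ per half-edge. Summing C9 over all $\sigma\in\Sigma_\beta\cup\widehat\Sigma_\beta$ gives $\sum_\sigma e_{i,j,\sigma}=\sum_\sigma e_{j,i,\sigma}$ (since $\hat\sigma$ also ranges over the full alphabet), and substituting into C8 yields $\sum_\sigma e_{i,j,\sigma}=1$; as the $e_{i,j,\sigma}$ are $0/1$, exactly one is $1$. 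The same constraint C9, read at the single symbol $\sigma=a$, gives $e_{j,i,\hat a}=1$, so $\lambda((v_j,e))=\hat a$ and $\lambda$ is a genuine assembly design.

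It remains to match $\lambda$ to the pot recorded by the $t_n$ variables. By C7, $w_{i,a}=\sum_j e_{i,j,a}=d_a(v_i)$, so the $w$-variables are the true labelled degrees and $f(v_i)=\{a^{w_{i,a}}\}$ by Definition \ref{def:relf}. By C3 there is a unique $n$ with $x_{i,n}=1$; the key maneuver is that C4 makes every difference $w_{i,a}-t_{n,a}$ nonpositive while C5 forces their sum to vanish, so each difference is zero and $f(v_i)=t_n$. Thus $\im f$ is exactly the set of tiles $t_n$ assigned to some vertex, which are the nonempty ones singled out by $k_n=1$ via C6, giving $\im f=P$; since all labels lie in $\Sigma_\beta\cup\widehat\Sigma_\beta$, at most $\beta$ bond-edge types occur. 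The principal obstacle I anticipate is precisely this last bundle of indicator constraints: I must argue the nonpositive-sum-to-zero step rigorously, check that $k_n$ faithfully separates used tiles from empty placeholders so that $\im f=P$ rather than merely $\im f\subseteq P$, and confirm that the edge-based data $e_{i,j,\cdot}$ and the tile-based data $t_{n,\cdot}$ are forced into agreement; the remaining constraints C1 and C2 are then automatically satisfied consistency conditions, retained only to give the solver extra structure.
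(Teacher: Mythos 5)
Your proof is correct and takes essentially the same route as the paper's: the forward direction transcribes the definitions constraint by constraint (disposing of C6 by cases on $k_n$ using $d_i \leq \Delta(G)$), and the converse constructs $\lambda$ from the $e_{i,j,\sigma}$ variables, derives well-definedness and complementarity from C8 together with C9, and recovers $f(v_i) = t_{n}$ from C3 combined with C4 and C5. Your explicit ``each difference is nonpositive and they sum to zero, hence each is zero'' step is just a sharper rendering of the C4/C5 combination the paper leaves implicit, and your flagged concern about $k_n$ and $\im f = P$ is a looseness present in the paper's own proof as well.
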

\begin{proof} 
    Assume that a pot $P = \{t_1, \cdots t_n\}$ realizes a graph $G$ with realization function $f$. We prove that $Var(\lambda)$ satisfies all of the pot construction constraints given in Definition \ref{def:PCC}. 
     
     C1 follows from Lemma \ref{lemma:P_realize}. C2 follows from the definition of labelled degree. C3, C4, C5, C7, C8, and C9 directly follow from the definition of a realization function (the fact that it is well defined and surjective on $P$).
     
     For C6, we must consider two cases that depend on the value of $k_n$. If $k_n = 0$, then $t_n$ is an empty tile, and $\Sigma_{\sigma \in \Sigma_\beta \cup \hat{\Sigma}_\beta} t_{n,\sigma} = 0$. If $k_n = 1$, then there exists a vertex $v_i$ such that $f(v_i)=t_n$, which implies $x_{i,n}=1$. From C2 and C5 we obtain the following.
     $$\sum_{\sigma \in \Sigma_{\beta}\cup \widehat{\Sigma}_{\beta}} t_{n,\sigma} = \sum_{\sigma \in \Sigma_{\beta}\cup \widehat{\Sigma}_{\beta}}w_{i,\sigma} = d_i $$
     Since $\Delta(G) \geq d_i$,
     $$-\Delta(G) k_{n}+\sum_{\sigma \in \Sigma_{\beta}\cup \widehat{\Sigma}_{\beta}}t_{n,\sigma}  = -\Delta(G) + d_i \leq 0.$$ In either case, we have shown the inequality holds.

     Conversely, now assume that the decision variable tuples \[C = \{(t_{n,\sigma}, w_{i,\sigma}, x_{i,n}, k_{n}, e_{i,j,\sigma} ) \mid \sigma \in \Sigma_{\beta} \cup \widehat{\Sigma}_{\beta}, i,j,n \in \ZZ_{N}\}\] satisfies $PCC(\beta)$. 
     
     We construct assembly design $\lambda$ so that $Var(\lambda) = C$ using the $e_{i, j, \sigma}$ variables.
     
     Let $\lambda((v_i, \{v_i,v_j\}))=a$ if and only if $e_{i,j,a}=1.$

    First, we must show that $\lambda$ is well defined. Thus, we must show $\lambda((v_i, \{v_i,v_j\}))$ is the corresponding hatted symbol to $\lambda((v_j, \{v_i,v_j\}))$  (see Definition \ref{def:assembly_design}). For example, if $\lambda((v_i, \{v_i,v_j\})) = \sigma_1$, then $\lambda((v_j, \{v_i,v_j\})) = \hat{\sigma_1}$
    Rearranging the sum in order to group the hatted and unhatted terms together in C8 gives the following. 
    \begin{equation*}
        \sum_{\sigma \in \Sigma_{\beta}\cup \widehat{\Sigma}_{\beta}}e_{i,j,\sigma} + e_{j,i,\sigma} = \sum_{\sigma \in \Sigma_{\beta}\cup \widehat{\Sigma}_{\beta}}e_{i,j,\sigma} + e_{j,i,\hat{\sigma}} = 2
    \end{equation*}
   C9 ensures that $e_{i,j,\sigma} = e_{j,i,\hat{\sigma}}$, so exactly one of the terms $e_{i,j,\sigma} + e_{j,i,\hat{\sigma}}$ is equal to 2, which shows that both $e_{i,j,\sigma}$ and $e_{j,i,\hat{\sigma}}$ are 1, so  $\lambda$ is well defined.
   
   Now, we show that $Var(\lambda) = C$. The $e_{i,j,a}$ decision variables in $C$ and $Var(\lambda)$ are the same by construction. C7 ensures that $w_{i, \sigma}$ equals the labelled degree of $v_i$, so the $w_{i,a}$ decision variables are the same in $C$ and $Var(\lambda)$. C4, C5, and C6 ensure that the $t_{n,a}, k_n$, and $x_{i,n}$ decision variables match, as well.
    
   C4 also tells us that for each $i$, there is some unique $n_i$ such that $x_{i, n_i} = 1$. Combining this with C5, we get that $t_{n_i,\sigma} = w_{i,\sigma}$. Since $f(v_i)$ is a tile with cohesive ends $w_{i, \sigma}$, $f(v_i) = t_{n_i}$ as desired.
\end{proof}

\subsection{Scenario 1 OPP Algorithm}
\label{subsec:s1_alg}
In Scenario 1, we can restrict ourselves to $\beta = 1$, since all optimal pots in Scenario 1 use only one bond-edge type \cite{ellis2014minimal}. Thus, we employ the following integer program to solve the OPP. 

\begin{equation}
    \min_C \sum_{n=1}^{|G|}k_n \text{ subject to $PCC(1)$} \label{lp:s1}
\end{equation}
where $C$ ranges over all tuples \[\{(t_{n,b}, w_{i,\sigma}, x_{i,n}, k_{n}, e_{i,j,b}): \lambda \in \Sigma_{1} \cup \widehat{\Sigma}_{1}, i,j,n \in \ZZ_{|G|}\}\]. This gives rise to the following algorithm.
\begin{algorithm}
    \caption{Scenario 1 algorithm}
    \text{Input: Graph $G$}
    \text{Output: Pot $P$ such that $G \in \mathcal{O}(P)$}
    \label{alg:S1_ILP}
    \begin{algorithmic}[1]
        \State $C \gets $ the solution to LP \ref{lp:s1} 
        \State return $P = \{t_1, \cdots t_N\}$, where $\forall i: t_i = \{a^{t_{n,a}}\hat{a}^{t_{n, \hat{a}}}\}$ for $t_{n,a}, t_{n,\hat{a}} \in C$
    \end{algorithmic}
\end{algorithm}

\begin{theorem}
   Algorithm \ref{alg:S1_ILP} solves the OPP in Scenario 1.
\end{theorem}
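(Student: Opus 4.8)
The plan is to establish a two-way correspondence between feasible solutions of the integer program in LP \ref{lp:s1} and pots that realize $G$ using a single bond-edge type, under which the objective $\sum_{n=1}^{|G|} k_n$ counts the number of non-empty tile types. Theorem \ref{thm:PCC} does the heavy lifting here: a pot $P = \{t_1,\dots,t_N\}$ realizes $G$ with $\beta$ bond-edge types exactly when $Var(\lambda)$ satisfies $PCC(\beta)$. Specializing to $\beta = 1$, every feasible tuple $C$ of LP \ref{lp:s1} has the form $Var(\lambda)$ for an assembly design $\lambda$ whose assembling pot realizes $G$, and that pot is precisely the one reconstructed in line 2 of Algorithm \ref{alg:S1_ILP} from the $t_{n,a}$ variables; conversely, any one-bond-edge-type realization of $G$ yields a feasible $C$. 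This already gives correctness in the sense of feasibility: the returned $P$ satisfies $G \in \mathcal{O}(P)$.

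First I would check that the program is feasible at all, so line 1 returns something. This holds because $G$ always admits a realization with a single bond-edge type; indeed \cite{ellis2014minimal} shows that the optimal Scenario 1 pots use only one bond-edge type, and such pots exist, so $PCC(1)$ has at least one feasible point.

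Next I would argue that at an optimal solution the objective value equals the number of distinct tile types in the output pot. Constraint C6 forces $k_n = 1$ whenever slot $n$ is non-empty, while minimization drives $k_n$ to $0$ for every empty slot. Two further minimization arguments pin this down: an optimal solution contains no non-empty slot unused by any vertex (such a slot could be emptied, lowering the objective), and no two non-empty slots can carry identical cohesive-end multisets (the vertices assigned to one could be reassigned to the other via the $x_{i,n}$ variables, again lowering the objective). Hence at optimality $\sum_n k_n = \#P$, the true number of tile types in $P = \im f$.

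Finally, combining the correspondence with these counting facts shows that the optimal value of LP \ref{lp:s1} equals $\min\{\#P : P \text{ realizes } G \text{ with one bond-edge type}\}$. Invoking the result of \cite{ellis2014minimal} that some pot attaining $T_1(G)$ uses a single bond-edge type, this minimum is exactly $T_1(G)$, and fixing $\beta = 1$ simultaneously attains $B_1(G) = 1$; thus the output pot is optimal in Scenario 1. The main obstacle I anticipate is the bookkeeping in this last counting step, namely ruling out duplicate or unused tile slots at optimality so that the linear objective $\sum_n k_n$ faithfully counts distinct tile types rather than merely bounding it, together with cleanly invoking the $\beta = 1$ reduction so that restricting the program to one bond-edge type does not sacrifice global optimality across all Scenario 1 pots.
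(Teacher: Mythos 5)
Your proposal is correct and takes essentially the same route as the paper: Theorem \ref{thm:PCC} provides the correspondence between feasible points of the integer program and one-bond-edge-type realizations of $G$, minimality of $\sum_n k_n$ yields optimality by the same contradiction argument, and the $\beta=1$ reduction is justified by the same result from \cite{ellis2014minimal}. Your additional bookkeeping --- verifying feasibility, and ruling out unused or duplicate non-empty tile slots so that $\sum_n k_n$ exactly counts distinct tile types --- spells out details that the paper's terser proof leaves implicit.
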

\begin{proof}
    A solution to the Scenario 1 integer program satisfies $PCC(1)$, so by Theorem \ref{thm:PCC} there exists an assembly design $\lambda$ such that $Var(\lambda) = C$ and the pot with tiles $t_n$, where $t_n$ has $t_{n,\sigma}$ cohesive end types of type $\sigma$, realizes $G$. 
    
    
   Now, suppose there is a pot $P'$ realizing $G$ such that $\#P' < \#P$. Let $\lambda'$ be an assembly design corresponding to $P'$. Then, by Theorem \ref{thm:PCC}, $Var(\lambda')$ satisfies $PCC(1)$. The sum of the $k_n$ decision variables in $Var(\lambda')$, which counts the number of tile types of $P'$, would be less than the sum of the $k_n$ decision variables in $Var(\lambda)$, thus contradicting the minimality of $C$.
\end{proof}

\subsection{Scenario 2 OPP Algorithm}
\label{subsec:s2_alg}
Now we consider the Optimal Pot Problem in Scenario 2. An ideal approach to Scenario 2 would combine a pot generation integer program similar to Algorithm \ref{alg:S1_ILP} with the SRP integer program given in (\ref{lp:srp}) into one integer program. Unfortunately, since the two integer programs employ different decision variables, this would require the ability to multiply decision variables, making the problem non-convex. Additionally, the objective functions for the two integer programs are conflicting, which creates a bilevel optimization problem. These two challenges make solving a single Scenario 2 integer program computationally infeasible. Instead, we will not combine Scenario 2 into a single integer program - we will run the integer programs separately. 

In the algorithm presented here, a pot generation integer program similar to Algorithm \ref{alg:S1_ILP} iteratively generates possible realizations of our graph, and then the SRP integer program given in (\ref{lp:srp}) determines whether each generated realization satisfies the constraints of Scenario 2.  If the SRP integer program determines that a pot realizes a graph of smaller order than the target (and hence is not a valid Scenario 2 solution), then the constraints of the pot generation integer program are modified to ensure that the offending pot is not generated again. An overview of this process is shown in Figure \ref{fig:s2sub}.

\begin{figure}
    \centering
    \includegraphics[width=0.5\linewidth]{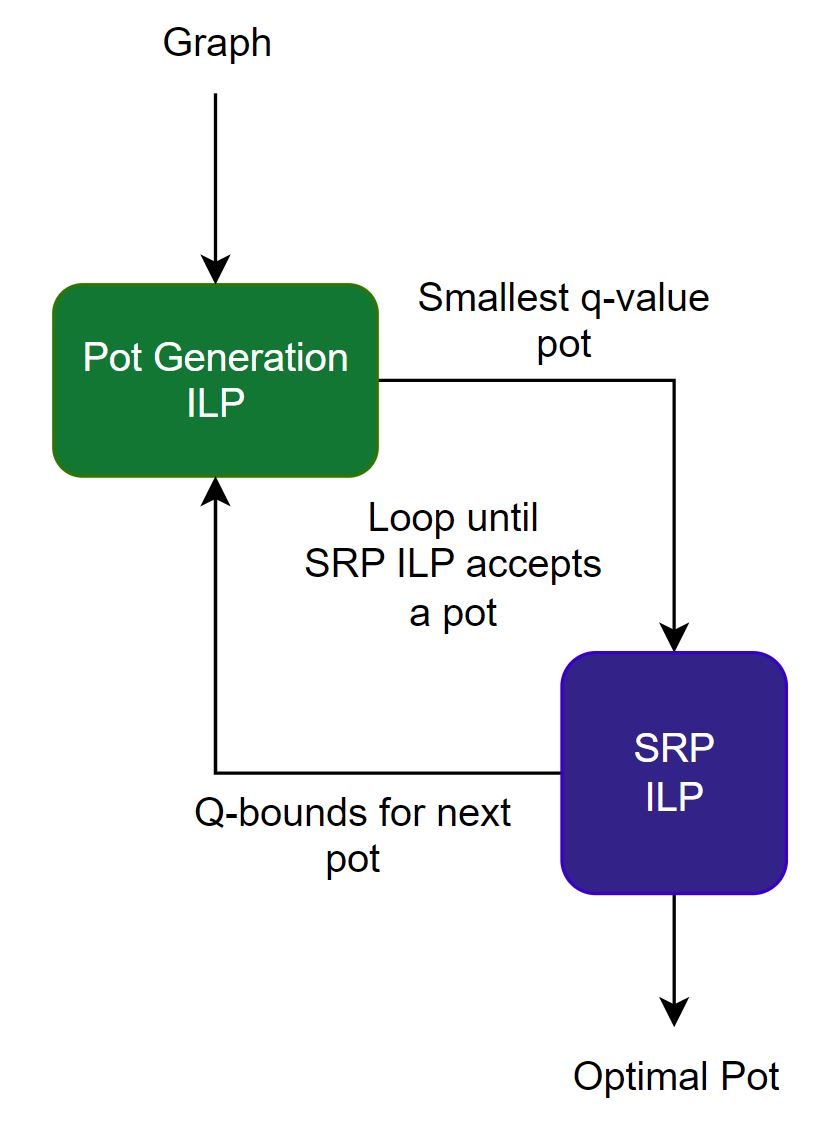}
    \caption{Illustration of Scenario 2 sub-algorithm to determine if there is a pot with $\theta$ bond-edge types and $\beta$ bond-edge types realizing $G$.}
    \label{fig:s2sub}
\end{figure}

In order to efficiently track which pots have already been generated, we seek to create an ordering on the collection of pots. Since pots are defined by their tiles, which are defined by their cohesive-end types, we can fully specify a pot with an integer vector whose entries correspond to the cohesive-end types of its tiles. First, we encode tiles as positional integer vectors. 
\begin{definition}
    The \textit{tile vector} $\vec{v}(t_n)$ of tile $t_n$ in collection $Var(\lambda)$ is the vector whose $i$-th component, where $i$ is odd, is the number of cohesive-ends of the type $\sigma_i$, and whose $j$-th component, where $j$ is even, is the number of cohesive-ends of type $\hat{\sigma}_{j-1}$.
    \begin{equation*}
        \vec{v}(t_n) = \langle t_{n,\sigma_1}, t_{n,\hat{\sigma}_1}, t_{n,\sigma_2}, t_{n,\hat{\sigma}_2}, \cdots t_{n,\sigma_{\beta}}, t_{n,\hat{\sigma}_{\beta}} \rangle.
    \end{equation*}
\end{definition}

To these vectors we can assign an integer in a straightforward manner.
\begin{definition}
     Let the \textit{$q$-value} of $t_n$ be tile vector $\vec{v}(t_n)$ written as an integer in base $\Delta(G)$, or formally \[ \sum_{i=1}^{|v|}v(t_n)[i] * \Delta(G)^{|v| - i} \]
\end{definition}

A candidate for encoding for a pot $P$ is the concatenation of all $\vec{v}(t_n)$ for all $t_n \in P$. But, given that a pot is a set, we should expect all permutations of the tiles to yield the same pot vector. To make the pot vector invariant to permutations, we order the tile vectors lexicographically.

\begin{definition}
    Given any two integer vectors $\vec{v} = \langle v_1, \cdots v_n \rangle $ and $\vec{s} =\langle s_1, \cdots s_n \rangle$, $v <_{lex} s$ if there is an integer $m$ such that 
    \begin{enumerate}
        \item $0 \leq m < n$
        \item $v_i = s_i$ for all $i < m$
        \item $v_m < s_m$
    \end{enumerate} 
\end{definition}

Note that $\vec{v} <_{lex} \vec{s}$ if and only if $q(v) < q(s)$. 

\begin{definition}
     Let $\langle v_1 \mid \cdots \mid v_j \rangle$ denote the concatenation of $v_1, \cdots v_j$. Then the \textit{pot vector} $V(P)$ of $P = \{t_1, \cdots, t_n\}$ is the concatenation of its tile vectors 
    \begin{equation*}
       V(P) = \langle \vec{v}(t_{j_1}) | \cdots | \vec{v}(t_{j_n}) \rangle
    \end{equation*} 
    where $j_1, j_2, \cdots j_n$ is the unique ordering of the tiles in decreasing lexicographic order (i.e. such that $\vec{v}(t_{j_i}) <_{lex} \vec{v}(t_{j_{i-1}})$  for all $2 \leq i \leq n$).
\end{definition}

Now that we have a well-defined encoding of a pot as a vector, we can encode that pot uniquely as an integer. 
\begin{definition}
     The \textit{characteristic value} of $P$, $q(P)$, is $V(P)$ written as an integer in base $\Delta(G)$. That is, \[q(P) = \sum_{i=1}^{\#V}V(P)[i] * \Delta(G)^{\# V - i}.\] \label{def:char_val} 
\end{definition}

\begin{example}
   The pot vector for the pot $P = \{a\hat{c}, \hat{a}\hat{b}, bc\hat{c}, \hat{b}c^2\}$ is as follows.
      \begin{equation*}
        V(P) = \langle 100001 \mid 010100 \mid 001011 \mid 000120 \rangle
     \end{equation*}

    The characteristic value of $P$ in base 10 is the following integer. $$q(P) = 100,001,010,100,001,011,000,120$$ 
\end{example}

We will now use the pot vector to build constraints for a modified pot generation integer program that returns a valid pot with the smallest characteristic value. We can only compare characteristic values for fixed-size vectors, so we design our integer program to search pots with fixed numbers of bond-edge and tile types. To achieve this, we define the Pot Restriction Constraints.

\begin{definition}
     The following will be referred to as the \textit{Pot Restriction Constraints} on $\theta$ tile types and $\beta$ bond-edge types, denoted $PRC(\theta, \beta)$.
\end{definition}
\begin{enumerate}[label=C\arabic*:]
    \item $P$ has exactly $\theta$ tiles:
    \begin{equation*}
        t_{\theta + 1, \sigma} = t_{\theta + 2, \sigma} = \cdots t_{N, \sigma} = 0 \qquad \forall \sigma \in \Sigma_{\beta} \cup  \hat{\Sigma_{\beta}}
    \end{equation*}
    \item Each bond-edge type is used:
    \begin{equation*}
        \sum_{n=1}^{\theta}t_{n,\sigma} \geq 1 \qquad \forall \sigma \in \Sigma_{\beta}
    \end{equation*}
    \item Each tile type is used:
    \begin{equation*}
        \sum_{i=1}^{N}x_{i,n} \geq 1 \qquad \forall n\in\mathbb{Z}_{\theta}
    \end{equation*}
    \item The collection $\{t_{n, \sigma}\}$ is such that the tiles are in correct lexicographic order. That is,
    \begin{equation*}
        q(t_n) - q(t_{n+1}) \geq 1 \qquad \forall n\in\mathbb{Z}_{N-1}
    \end{equation*}
\end{enumerate}

Extending Theorem \ref{thm:PCC} using these constraints is then fairly straightforward. 

\begin{theorem} \label{thm:PRC}
    There is an assembly design $\lambda$ so that vectors $$\vec{t_i} = \langle t_{i, \sigma_1}, t_{i, \hat{\sigma}_1}, \cdots t_{i, \sigma_n}, t_{i, \hat{\sigma}_n} \rangle $$ assemble into a valid pot vector $\langle \vec{t_1} \mid \cdots \mid \vec{t_n} \rangle$ with $\beta$ bond-edge types and $\theta$ tiles if and only if $Var(\lambda)$ satisfies $PCC(\theta)$ and $PRC(\theta, \beta)$.
\end{theorem}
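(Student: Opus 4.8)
The plan is to treat this as a strengthening of Theorem \ref{thm:PCC}. That theorem already supplies the hard equivalence, namely that the Pot Construction Constraints hold exactly when the encoded pot realizes $G$, so here the only genuinely new work is to verify that the four Pot Restriction Constraints $PRC(\theta,\beta)$ are precisely what upgrade a pot realizing $G$ to a \emph{valid pot vector} with exactly $\theta$ tiles and $\beta$ bond-edge types. I would prove the two implications separately, invoking Theorem \ref{thm:PCC} for the $PCC(\theta)$ half and then translating each constraint of $PRC(\theta,\beta)$ into one structural feature of the pot vector.

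For the forward implication I would assume an assembly design $\lambda$ whose tile vectors $\vec{t_i}$ concatenate into a valid pot vector with $\theta$ tiles and $\beta$ bond-edge types. This data is exactly a pot $P$ realizing $G$, so Theorem \ref{thm:PCC} immediately gives that $Var(\lambda)$ satisfies $PCC(\theta)$. I would then read off $PRC(\theta,\beta)$ one constraint at a time: C1 holds because a pot of $\theta$ tiles leaves every slot past index $\theta$ empty; C2 holds because all $\beta$ bond-edge types actually occur in $P$; C3 holds because the realization function $f$ of Definition \ref{def:relf} is surjective onto $P$, so each tile type is assigned to at least one vertex; and C4 holds by the definition of the pot vector, whose tile blocks are listed in strictly decreasing lexicographic order.

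For the reverse implication I would assume $Var(\lambda)$ satisfies both $PCC(\theta)$ and $PRC(\theta,\beta)$. Theorem \ref{thm:PCC} applied to the $PCC(\theta)$ part supplies an assembly design $\lambda$, with $Var(\lambda)$ equal to the given tuple, under which the encoded pot realizes $G$; it then remains only to certify the pot's shape from the restriction constraints. Constraint C1 forces all tile slots beyond the $\theta$-th to be empty, C2 forces each of the $\beta$ bond-edge types to appear so the bond-edge count is exactly $\beta$, and C3 forces every one of the $\theta$ slots to be assigned to some vertex so the slots are genuine tile types. The decisive step is C4: using the earlier observation that $\vec{v} <_{lex} \vec{s}$ if and only if $q(v) < q(s)$, each inequality $q(t_n) - q(t_{n+1}) \geq 1$ gives $q(t_{n+1}) < q(t_n)$ and hence $\vec{t}_{n+1} <_{lex} \vec{t}_n$, which is exactly the decreasing-lex ordering required for $\langle \vec{t_1} \mid \cdots \mid \vec{t_n} \rangle$ to be a valid pot vector.

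I expect the one non-mechanical point to be C4 and its double duty. The strict inequality $q(t_n) - q(t_{n+1}) \geq 1$, as opposed to $\geq 0$, must be shown to guarantee not only the decreasing ordering but also that the $\theta$ tile vectors are pairwise distinct: since the equivalence $\vec{v} <_{lex} \vec{s} \iff q(v) < q(s)$ forces equal tiles to have equal $q$-values, the strict gaps certify that the $\theta$ slots hold $\theta$ genuinely distinct tile types, which is what makes both the pot vector and the characteristic value $q(P)$ of Definition \ref{def:char_val} well defined. The only hypothesis this argument leans on is that the fixed alphabet of $\beta$ bond-edge types gives every tile vector the common length $2\beta$, so that the lexicographic comparison, and hence the $q$-value comparison, is meaningful; all remaining constraints reduce to direct appeals to Theorem \ref{thm:PCC} and Definition \ref{def:relf}.
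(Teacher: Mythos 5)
Your proposal is correct and follows essentially the same route as the paper, which in fact offers no written proof of Theorem \ref{thm:PRC} beyond the remark that ``extending Theorem \ref{thm:PCC} using these constraints is then fairly straightforward'' --- your two-implication, constraint-by-constraint translation (invoking Theorem \ref{thm:PCC} for the realization part and reading each of C1--C4 of $PRC(\theta,\beta)$ off the definition of a valid pot vector) is precisely that straightforward extension written out. Your added observation that the strict gaps $q(t_n)-q(t_{n+1})\geq 1$ in C4 do double duty, enforcing the decreasing lexicographic order \emph{and} certifying that the $\theta$ tile slots hold pairwise distinct tile types (so the pot is a genuine set and $q(P)$ is well defined), is a detail the paper leaves implicit, and your only external reliance is on the paper's own stated equivalence between $<_{lex}$ and $q$-value comparison.
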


We are now ready to define the Scenario 2 pot generation integer program. The goal of this program is to find a pot with the minimum characteristic value that realizes $G$ with $\beta$ bond-edge types and $\theta$ tile types, such that the characteristic value $q(P)$ is greater than some prespecified bound $Q$:
\begin{align}
    \label{lp:s2}
    &\min_C q(P) \\
    & \text{ subject to $PCC(\beta)$, $PRC(\theta, \beta)$, $q(P) \geq Q$} \nonumber
\end{align}

The algorithm provided below incorporates LP (\ref{lp:s2}) to find a valid Scenario 2 pot given bounds on the number of bond-edge and tile types.
\begin{algorithm}[ht]
\text{Input: A graph $G$, number of bond edge types $\beta$, and number of tile types $\theta$}
\text{Output: A valid Scenario 2 pot with $\beta$ bond-edge types and $\theta$ tile types}
\text{OR an indication that none exists}

\caption{finds valid Scenario 2 pot with $\beta$ bond-edge types and $\theta$ tile types}
\label{alg:s2sub}
\begin{algorithmic}[1] 
    \State $Q \gets 0$
    \While {1 = 1}
        \State $C \gets$ decision variables solving LP \ref{lp:s2} with $q(P) \geq Q$ 
        \If{$C$ is non-empty (a solution to LP \ref{lp:s2} exists)}
            \State $P \gets \{t_1, \cdots, t_N\}$, where $\forall i: t_i = \displaystyle\prod_{\sigma \in \Sigma}\sigma^{t_{n,\sigma}}\hat{\sigma}^{t_{n, \hat{\sigma}}}$
            \State $M \gets$ the smallest order of graph $P$ realizes, found using LP \ref{lp:srp}
            \If{$M = N$}
                \State return $P$
            \Else
                \State $Q \gets q(P) + 1$
            \EndIf
        \Else
            \State There is no pot of size $\theta$ with $\beta$ bond-edge types realizing $G$
        \EndIf
    \EndWhile
\end{algorithmic}
\end{algorithm}

We will now prove the correctness of Algorithm \ref{alg:s2sub}. We will need the following lemma.

\begin{lemma}
    \label{lem:squeeze}
    Let $S = \{P'_1, \cdots P'_n\}$ be the set of all pots $P'$ such that:
    \begin{enumerate}
        \item $P'_i$ realizes $G$ with $\theta$ tile types and $\beta$ bond-edge types.
        \item $P'_i$ is not a valid Scenario 2 pot, i.e. it realizes at least one smaller order graph.
        \item $q(P'_i) < q(P'_{i+1})$ for all $i$ such that $0 < i \leq n$ 
        \item $q(P'_n) < q(\Tilde{P})$ for all $i$ and any valid Scenario 2 pot $\Tilde{P}$.
    \end{enumerate} 
    Then $P_i = P'$ and the algorithm continues to iteration $i + 1$ for all $1 \leq i \leq n$
\end{lemma}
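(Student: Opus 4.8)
The plan is to prove the statement by induction on the iteration index $i$, maintaining throughout the invariant that at the start of iteration $i$ the algorithm's bound is $Q = q(P'_{i-1}) + 1$, with the convention that this reads $Q = 0$ when $i = 1$. The engine of the argument is Theorem \ref{thm:PRC}: its feasible decision-variable assignments $C$ are in bijection with the pots realizing $G$ using exactly $\theta$ tile types and $\beta$ bond-edge types, and LP \ref{lp:s2} selects among these the pot of smallest characteristic value subject to the cut $q(P) \geq Q$. Thus at each iteration the returned pot $P_i$ is, by definition of the objective, the unique pot of minimal characteristic value that realizes $G$ with the prescribed parameters and satisfies $q(P_i) \geq Q$; uniqueness holds because $q$ is an injective encoding of such pots (Definition \ref{def:char_val}).

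First I would unpack the defining conditions on $S$. Conditions (3) and (4) say that $P'_1, \dots, P'_n$ are listed in strictly increasing characteristic value and that each has characteristic value below that of every valid Scenario 2 pot $\tilde{P}$. Combined with the fact that $S$ collects all invalid pots lying below the valid ones, this yields the global picture needed for the induction: among all pots realizing $G$ with $\theta$ tiles and $\beta$ bond-edge types, the $n$ smallest in characteristic value are exactly $P'_1, \dots, P'_n$. Indeed, no valid pot can intrude below $q(P'_n)$ by condition (4), and any invalid pot below $q(P'_n)$ would itself lie below all valid pots and hence already belong to $S$.

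With this ordering in hand the induction is short. For the base case $i = 1$ we have $Q = 0$; since all characteristic values are nonnegative the cut is vacuous, so LP \ref{lp:s2} returns the globally smallest feasible pot, namely $P'_1$, giving $P_1 = P'_1$. Because $P'_1$ is not a valid Scenario 2 pot it realizes some graph of order smaller than $N = \#V(G)$; by Lemma \ref{lemma:P_realize} the value $M$ computed from LP \ref{lp:srp} equals this minimal order, so $M < N$, the test $M = N$ fails, and the algorithm sets $Q \gets q(P'_1) + 1$ and proceeds to iteration $2$, re-establishing the invariant. For the inductive step, suppose the claim holds through iteration $i < n$, so that entering iteration $i+1$ we have $Q = q(P'_i) + 1$. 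The cut $q(P) \geq q(P'_i) + 1$ removes precisely $P'_1, \dots, P'_i$ and nothing else of smaller value, so the feasible minimizer is $P'_{i+1}$, giving $P_{i+1} = P'_{i+1}$. Again $P'_{i+1}$ is invalid, so LP \ref{lp:srp} returns $M < N$, the test fails, $Q$ is advanced to $q(P'_{i+1}) + 1$, and the loop continues to iteration $i+2$. This carries the induction through $i = n$.

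The main obstacle is the bookkeeping that pins the iteration-$i$ minimizer to be exactly $P'_i$ rather than a valid pot or an out-of-order invalid pot. This rests on three points that must be stated carefully: that $q$ is injective on pots with fixed $\theta$ and $\beta$, so that ``minimum characteristic value subject to $q(P) \geq Q$'' designates a single pot; that condition (4) genuinely forbids any valid pot from having characteristic value at most $q(P'_n)$, so valid pots are never selected during the first $n$ iterations; and that LP \ref{lp:srp} faithfully reports the minimal order of a graph realized by a pot, so that the invalidity of each $P'_i$ is detected as $M < N$. Once these are secured, threading the successive updates $Q \gets q(P_i) + 1$ through the loop of Algorithm \ref{alg:s2sub} is routine.
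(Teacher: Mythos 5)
Your proposal is correct and follows essentially the same route as the paper: an induction on the iteration index with the invariant $Q_i = q(P'_{i-1})+1$, using condition (3) to exclude the already-seen pots and condition (4) to rule out any valid Scenario 2 pot intruding below $q(P'_i)$, so that the LP minimizer at iteration $i$ is forced to be $P'_i$ and the failed SRP check ($M < N$) advances the loop. Your preliminary ``global picture'' observation that $P'_1,\dots,P'_n$ are exactly the $n$ smallest feasible pots is just the paper's inductive step stated up front, not a different argument.
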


\begin{proof}
    Let $P_i$, $Q_i$, $M_i$ be the values of $P, Q, M$ in the $i$th iteration of Algorithm \ref{alg:s2sub}. 
    
    Now, since $P'_1$ has the smallest characteristic value among all non-valid Scenario 2 pots realizing $G$ and by condition 4, has a smaller characteristic value than any valid Scenario 2 pot realizing $G$, $P_1$ is the solution to Integer Program \ref{lp:s2}, hence $P_1 = P'_1$. Since it is not a valid Scenario 2 pot, the algorithm does not terminate.

    Assume then that $P_j = P'_j$ for $j < i$. Thus, at the $i$th iteration of the algorithm, $Q_{i} = q(P_{i-1}) + 1 = q(P'_{i-1}) + 1$
    
    Consider any pot $P$ satisfying LP \ref{lp:s2} and not equal to $P'_j$ for $j < i$. If $P$ is not a valid S2 pot, then  $q(P'_i) < q(P)$ since otherwise $P = P'_j$ for $j < i$. If it is a valid Scenario 2 pot, condition 4 guarantees that $q(P'_i) < q(P)$. Furthermore, $q(P'_i) > q(P'_{i-1})$ by condition 3. Thus, $P'_i$ has minimal characteristic value among all pots realizing $G$ with characteristic value greater than or equal to $Q_i = q(P'_{i-1}) + 1$, hence $P_i = P'_i$. By induction, the theorem is proved.
\end{proof}

\begin{theorem}
    Algorithm \ref{alg:s2sub} determines if there is a pot $P$ with $\beta$ bond-edge types and $\theta$ tile types such that $G \in \mathcal{O}(P)$ and for all $H \in \mathcal{O}(P)$, $\#V(H) \geq \#V(G)$. If at least one such pot exists, Algorithm \ref{alg:s2sub} returns the pot with the minimum characteristic value $Q^*$.
\end{theorem}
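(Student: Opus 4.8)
The plan is to treat the theorem as a short corollary of Lemma~\ref{lem:squeeze}, reducing it to an analysis of the single iteration immediately following the exhaustion of the relevant non-valid pots. Before invoking the lemma I would record that the search space is finite: constraint~C2 forces $\sum_{\sigma} w_{i,\sigma} = d_i \le \Delta(G)$ for every vertex, and constraints~C4--C6 then bound every tile-type entry $t_{n,\sigma}$ by $\Delta(G)$, so only finitely many tile types, and hence only finitely many pots with $\theta$ tile types and $\beta$ bond-edge types, can realize $G$. By Definition~\ref{def:char_val} each such pot has a characteristic value, and since $\vec v <_{lex} \vec s$ if and only if $q(\vec v) < q(\vec s)$, these values linearly order the pots.

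First I would treat the case in which at least one valid Scenario~2 pot exists. Let $\tilde P$ be the valid pot of minimum characteristic value $Q^* = q(\tilde P)$, and let $S = \{P'_1, \dots, P'_n\}$ be the set of all non-valid pots realizing $G$ with the prescribed $\theta$ and $\beta$ and with $q(P'_i) < Q^*$, listed in increasing characteristic value. This $S$ satisfies hypotheses~(1)--(4) of Lemma~\ref{lem:squeeze} by construction, so the algorithm computes $P_i = P'_i$ and proceeds to iteration $i+1$ for each $1 \le i \le n$, reaching iteration $n+1$ with $Q = q(P'_n) + 1$ (or with $Q = 0$ when $n = 0$). The crux is then to identify the solution of LP~\ref{lp:s2} at this iteration as $\tilde P$: any pot realizing $G$ with characteristic value at least $Q$ is either valid, hence of value at least $Q^*$, or non-valid of value at least $Q^*$, since every non-valid pot of smaller value already lies in $S$; as $\tilde P$ attains $Q^*$ and satisfies $q(\tilde P) \ge Q$, it is the minimizer. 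Because $\tilde P$ is valid, LP~\ref{lp:srp} returns $M = \#V(G)$ as the smallest order it realizes, the algorithm's equality test succeeds, and $\tilde P$ is returned, whose characteristic value is the minimum $Q^*$ over all valid pots.

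For the complementary case, in which no valid Scenario~2 pot exists, every pot realizing $G$ with the given parameters is non-valid, so I would take $S$ to be the finite set of all of them; hypothesis~(4) of Lemma~\ref{lem:squeeze} then holds vacuously. The lemma shows the algorithm processes all $n$ of these pots and continues, so iteration $n+1$ begins with $Q = q(P'_n) + 1$, which strictly exceeds the characteristic value of every pot realizing $G$. Hence LP~\ref{lp:s2} is infeasible, the returned tuple $C$ is empty, and the algorithm falls to its else branch and reports that no such pot exists, correctly certifying the absence of a valid Scenario~2 pot.

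The step I expect to be the main obstacle is the bookkeeping in the first case: verifying that incrementing $Q$ past each non-valid pot never leapfrogs a valid pot of smaller characteristic value, and that $S$ taken relative to $Q^*$ genuinely meets hypothesis~(4) of the lemma. This rests on the linearity of the characteristic-value order together with the minimality of $\tilde P$ among valid pots; once these are fixed, the arbitrary interleaving of valid and non-valid pots above $Q^*$ is irrelevant, since the algorithm need only clear the non-valid pots lying strictly below $Q^*$ before its minimizing step lands on $\tilde P$. A minor additional point to confirm is that reaching the else branch terminates the procedure rather than re-entering the while loop.
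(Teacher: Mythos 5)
Your proposal is correct and follows essentially the same route as the paper: both apply Lemma~\ref{lem:squeeze} to the set $S$ of (necessarily non-valid) pots with characteristic value below $Q^*$ and then argue that the very next iteration's minimizer of LP~\ref{lp:s2} must be the minimal valid Scenario~2 pot, which passes the SRP check and is returned. Your write-up is in fact slightly more careful than the paper's in two spots it glosses over --- you make explicit the finiteness of the pot space via constraints C2 and C4--C6, and you spell out the identification step the paper dismisses as ``clearly,'' also reusing the lemma (with condition~4 vacuous) for the no-valid-pot case where the paper instead argues directly from the strict increase of $Q$.
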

\begin{proof}
    Let $C_i, P_i$, $Q_i$, $M_i$ be the values of $C, P, Q, M$ in the $i$th iteration of Algorithm \ref{alg:s2sub}. 

    First, assume that there is no valid Scenario 2 pot. If there is no pot satisfying LP (\ref{lp:s2}), then, Lemma \ref{thm:PRC} implies that $C_1$ is empty and we will conclude that there is no valid Scenario 2 pot. If there are pots satisfying LP (\ref{lp:s2}), but all of those pots realize at least one graph of smaller order than $G$, then $M_i < N$ in all iterations $i$ of the algorithm. Since we increment $Q$ at each iteration and the maximum characteristic value of all possible pots realizing $G$ is finite, we will eventually create an instance of LP (\ref{lp:s2}) which has no solution, and hence the algorithm will also determine that there is no valid Scenario 2 pot.

    Conversely, assume that there is a valid Scenario 2 pot. Let $S = \{P'_1, \cdots, P'_n\}$ be the set of all pots realizing $G$ with a characteristic value less than $Q^*$ ordered by characteristic value. $S$ satisfies all of the conditions of Lemma \ref{lem:squeeze}, and so there will be an iteration $P_n = P'_{n}$, the element with the maximal characteristic value in $S$. Since $P'_{n} \in S$, $M_i < N$, and hence the algorithm will continue to iteration $i+1$, where $Q_{i+1} > q(P')$ for any $P' \in S$ by the maximality of $P'_{n}$. Therefore, $P_{i+1}$ is not in $S$, which means that it must be a valid Scenario 2 pot realizing $G$. The SRP integer program will verify that $M_{i+1} = N$, so the algorithm returns a pot $P_{i+1}$, which is clearly the valid Scenario 2 pot with the minimal characteristic value.
\end{proof}

We now have an algorithm that determines if there is a valid Scenario 2 pot with $\beta$ bond-edge types and $\theta$ tile types. The final step to develop a full Scenario 2 algorithm is to search the space of possible tuples $(\beta, \theta)$ for an optimal pot. We rely on the following Theorem from \cite{ellis2014minimal} to increase search efficiency.
\begin{theorem}\label{theoremB2T2}
    If $G$ is a graph with $n>2$ vertices, $B_2(G)+1\leq T_2(G)$ 
\end{theorem}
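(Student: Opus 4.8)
The plan is to lower-bound $T_2(G)$ by exhibiting, for a \emph{tile}-minimal Scenario~2 pot, a (generally different) Scenario~2 pot that is economical in bond-edge types. Precisely, let $P$ be a pot realizing $G$ in Scenario~2 with $\#P=T_2(G)$ tiles, and let $\vec{R}=(R_1,\dots,R_{T_2(G)})$ be the tile distribution of the order-$n$ realization, so that every $R_j\ge 1$ and $\sum_j R_j=n$. It suffices to construct a Scenario~2 pot $P'$ for $G$ with $\#\Sigma(P')\le T_2(G)-1$: since $B_2(G)$ is the minimum of $\#\Sigma(Q)$ over all Scenario~2 pots $Q$, this immediately yields $B_2(G)\le \#\Sigma(P')\le T_2(G)-1$, which is the claim. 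I would stress at the outset that one cannot simply take $P'=P$: the per-pot inequality $\#\Sigma(P)\le \#P-1$ is false in general (for example $P=\{\{a,b\},\{\hat{a}^2,\hat{b}^2\}\}$ realizes an order-$3$ graph in Scenario~2 with $\#\Sigma(P)=\#P=2$), so the two optima must be achieved by genuinely different pots.

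The quantitative engine behind the construction is a rank bound on the matrix $Z=(z_{i,j})$ of net cohesive-end counts. Because each bond-edge type contributes exactly one hatted and one unhatted end to every edge it labels, the positive integer vector $\vec{R}$ satisfies $Z\vec{R}=0$; as $\vec{R}\ne 0$, this forces $\operatorname{rank}(Z)\le \#P-1=T_2(G)-1$. Thus the $\#\Sigma(P)$ balance rows of the construction matrix span a space of dimension at most $T_2(G)-1$, which is the structural reason to expect that $T_2(G)-1$ bond-edge types already carry enough balancing information to pin down $G$. Concretely, I would pass to a maximal linearly independent subfamily of the balance rows---equivalently, work with a spanning tree of the quotient multigraph on the $T_2(G)$ tile-type classes of the realization, which has exactly $T_2(G)-1$ edges---and use it to index the bond-edge types of $P'$, orienting each edge of $G$ (assigning the hatted and unhatted versions) so that the new labeling is a valid assembly design of $G$ using at most $T_2(G)-1$ bond-edge types.

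The main obstacle is showing that the economical pot $P'$ remains in Scenario~2, i.e.\ that its smallest realizable order is still $n$ and not some smaller value. By Theorem~\ref{thm:spectrum}(3) this amounts to proving that $M(P')$ admits no nonnegative rational solution whose denominators have least common multiple below $n$. I would attack this by ordering the chosen (tree-indexed) bond-edge types so that $M(P')$ becomes row-triangular, forcing the coordinate ratios of any nonnegative solution to coincide with those of $\vec{R}$ and hence keeping the minimal denominator equal to $n$. The delicate points are handling the edges of $G$ that fall outside the spanning tree---these connect two tile classes whose net balance is already fixed, so one must show they introduce no new small-order solutions---and invoking the hypothesis $n>2$ to rule out the degenerate collapses (an order-$1$ loop from a self-balanced tile, or an order-$2$ double edge) that are exactly the obstructions when $n\le 2$. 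Once Scenario~2 is verified for $P'$, the chain $B_2(G)\le \#\Sigma(P')\le T_2(G)-1$ closes the argument.
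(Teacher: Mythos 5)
You have the logical reduction exactly right, and your opening observation is both correct and important: the per-pot inequality $\#\Sigma(P)\le \#P-1$ is false in this model, and your counterexample checks out --- the pot $\{\{a,b\},\{\hat{a}^2,\hat{b}^2\}\}$ has the unique spectrum point $\langle 2/3,\,1/3\rangle$, so its minimal realization order is $3$, achieved by the order-$3$ multigraph in which two degree-$2$ vertices are each doubly bonded to a center (legitimate here, since the model admits parallel edges); hence it is a valid Scenario~2 pot with $\#\Sigma(P)=\#P=2$. So any proof must indeed produce a \emph{different} pot $P'$ with $\#\Sigma(P')\le T_2(G)-1$, and the chain $B_2(G)\le\#\Sigma(P')\le T_2(G)-1$ is the right way to close. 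For calibration: the paper itself contains no proof of this statement --- it is imported verbatim from \cite{ellis2014minimal} solely to prune the $(\beta,\theta)$ search space --- so your attempt must stand on its own, and as written it does not.

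The gap is that the step you yourself call ``the main obstacle'' (Scenario~2 preservation for $P'$) is never executed, and the scaffolding around it contains two concrete errors. First, ``a maximal linearly independent subfamily of the balance rows'' is not equivalent to ``a spanning tree of the quotient multigraph on tile classes'': the rank of $Z$ can be strictly smaller than $\#P-1$ (Scenario~2 pots whose spectra have free variables exist, as shown in \cite{sorrels}, in which case the independent balance rows number at most $\#P-2$ while the tree still has $\#P-1$ edges), rows of $M(P)$ are indexed by bond-edge types rather than by class adjacencies, and deleting dependent rows is not an operation on pots --- relabeling edges changes the entries $z_{i,j}$ themselves, so $M(P')$ is not a submatrix of $M(P)$. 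Second, and fatally for the triangularity argument, you tacitly assume $P'$ retains the $T_2(G)$ tile classes of $P$, so that nonnegative solutions of $M(P')$ can be compared coordinatewise with $\vec{R}$. But a relabeling that uses only $p-1$ tree-indexed bond types will in general split old tile classes: two vertices of the same old type whose neighbors lie in different classes, or whose incident non-tree edges reuse different tree types, acquire different new tiles, so $\#P'$ can exceed $p$. Then $M(P')$ has at most $p$ independent rows against more than $p$ columns, free variables are guaranteed, and ``the coordinate ratios of any nonnegative solution coincide with those of $\vec{R}$'' does not even typecheck, let alone hold. You name precisely the two places where the construction can die --- the non-tree edges, and the degenerate small realizations (an $a\hat{a}$-style loop or an order-$2$ doubled edge) that the hypothesis $n>2$ must exclude --- but you resolve neither: no labeling rule for non-tree edges is given, and $n>2$ is never actually used. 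What you have is a correct reduction plus an unexecuted plan; to complete it you would need an explicit relabeling rule together with a verified computation showing the minimal order of $\mathcal{S}(P')$, in the sense of Theorem~\ref{thm:spectrum}, is still $n$ --- or else defer to the proof in \cite{ellis2014minimal}.
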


Theorem \ref{theoremB2T2} implies that it is unnecessary to search tuples $(\beta, \theta)$ where $\beta > \theta$. It is sufficient to search the remaining region in order of increasing number of tile types ($\theta)$ to find a pot achieving $(\beta, T_2(G))$ (see Figure \ref{fig:ver_search_space}). It is equivalent to search that same region in order of increasing number of bond-edge types to find a pot achieving $(B_2(G), \theta)$. We combine these procedures in the following algorithm.

\begin{figure}[h]
    \centering
    \includegraphics[width = 0.6\linewidth]{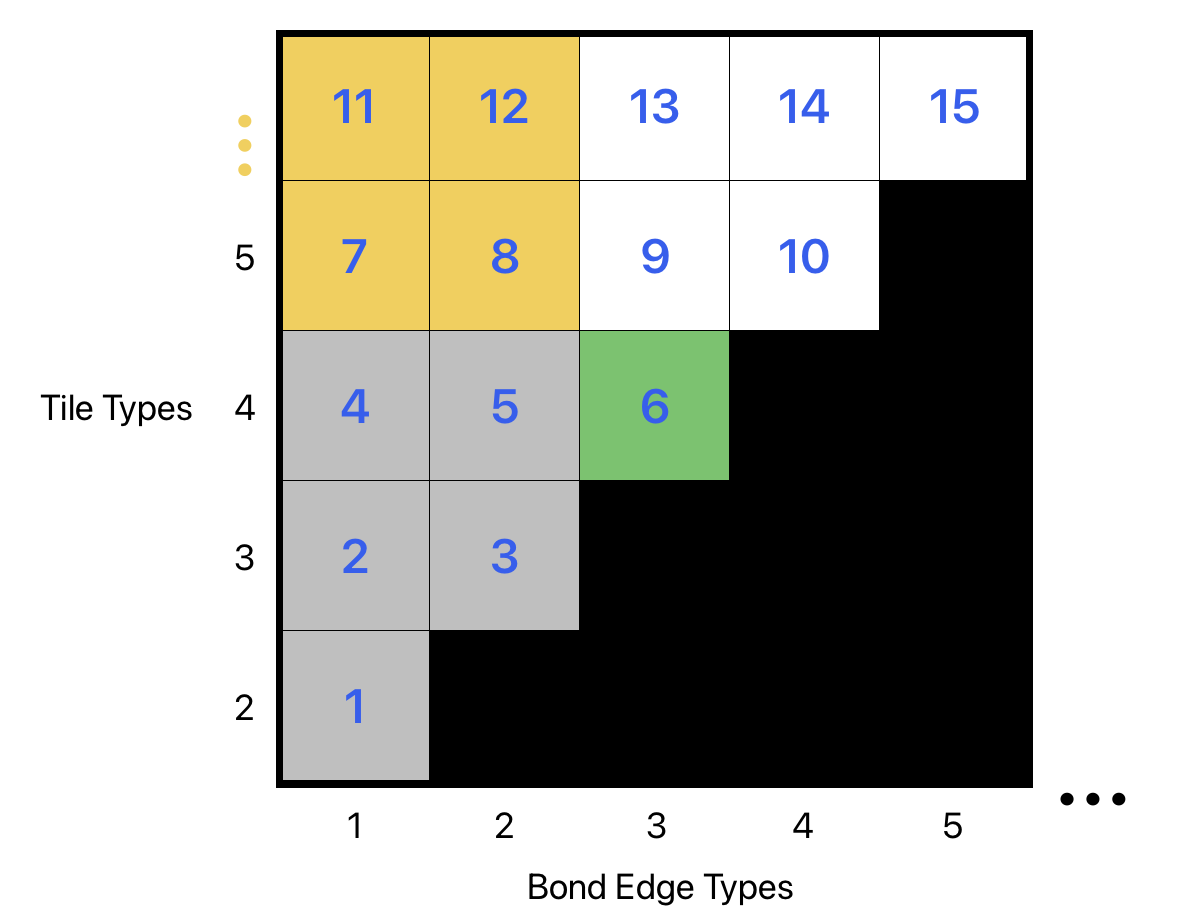}
       \caption{Search space of the Scenario 2 algorithm: the valid search region is searched in the order enumerated in blue: once a solution is found (green cell), we can choose to verify $B_2(G)$ by searching the yellow region}
    \label{fig:ver_search_space}
\end{figure}

\begin{algorithm}[h]
    \caption{Finds $T_2(G)$ for Scenario 2}
    \begin{algorithmic}[1]
        \State find the minimum $\theta$ for which there is an Scenario 2 pot using Algorithm \ref{alg:s2sub} and the search pattern pictured in Figure \ref{fig:ver_search_space}
        \State $T_2(G) = \theta$
        \State among all tuples $(\beta', \theta')$ where $\beta' \leq \beta, \theta' \geq \theta$, find the minimum $\beta'$ for which there is a valid Scenario 2 pot using Algorithm \ref{alg:s2sub}
        \State $B_2(G) = \beta'$
    \end{algorithmic}
\end{algorithm}

\newpage 
\section{Results and Discussion}
\label{sec:results}
We have implemented the algorithms described in Sections \ref{subsec:srp}, \ref{subsec:s1_alg} and \ref{subsec:s2_alg} in Python using the Gurobi MIP solver, which was chosen for its fast runtime and ability to handle many indicator constraints \cite{gurobi, luppold2018evaluating}. The Oriented Optimal Pot Solver ( \href{https://github.com/JacobAshw/OOPS}{OOPS}) for solving the OPP and the Subgraph Realization Problem Solver (SRPS) for solving the SRP are both available along with command-line interfaces and visualization tools to help with ease of use.

\subsection{Implementation of SRP Algorithm}
Even pushing the bounds beyond pots that would realistically be encountered (such as the 27-tile pot or the pot with a 200 half-edge time in Table \ref{table:SRPVerify}), we see that our proposed algorithm is able to compute the minimal realization size of a graph in less than a tenth of a second. This is due to the fact that the SRP integer program is small, with a single constraint for each bond-edge type in the pot. Since integer programs are still relatively easy to solve with constraints numbering in the hundreds (see Table \ref{table:S1Verify} for selected runtimes for the Scenario 1 algorithm), this means the SRP algorithm has a very high ceiling for solvable pots and is not a limiting constraint on the runtime of our larger Scenario 2 algorithm.

\begin{table}
\begin{center}
\caption {SRP Algorithm Verification}
\label{table:SRPVerify}
\scalebox{0.9}{
\begin{tabular}{l|l|l|l|l}
Pot & Pot Size & Minimal Realization Size & Tile Ratio & Runtime (s) \\ \hline
  $a\hat{a},a\hat{b},b\hat{A}$ &  3 & 1 & 1,0,0 & 0.06 \\ \hline
  $\hat{a}\hat{b}\hat{c},cc\hat{b}\hat{b},a\hat{b}\hat{b},b$ & 4 & 8 & 1,1,1,5 & 0.02\\ \hline
  $\hat{a},a^{200}$ & 2 & 201 & 200,1 & 0.02\\ \hline
  $\hat{a},a\hat{b},b\hat{c},\ldots,y\hat{z},z$ & 27 & 27 & $1,1,\ldots,1,1$ & 0.04 \\ \hline
  $abc,\hat{a}d,\hat{b}d,c\hat{d}^3$ & 4 & Invalid Pot & Invalid Pot & 0.01
\end{tabular} }
\end{center}
\end{table}

\subsection{Implementation of Scenario 1 OPP}

We use OOPS to compute optimal pots in Scenarios 1 and 2 for various graphs. This serves three separate purposes:
\begin{enumerate}
    \item Algorithmic verification: We ensure the optimality of known pots.
    \item Runtime analysis: Since the complexity of an integer program varies greatly depending upon the structure of the specific integer program, runtimes for example graphs give a general idea of the scope of graphs for which we can feasibly compute optimal pots.
    \item Discovery of new optimal pots: We search for optimal pots for previously unsolved graphs.
\end{enumerate}

In each scenario, we provide two tables. The first (Tables \ref{table:S1Verify} and \ref{table:S2verify}) compares the algorithm's output on previously solved graphs to ensure that the output aligns with the expected results. The second (Tables \ref{table:S1New} and \ref{table:S2new}) displays computed optimal pots on graphs that were previously unsolved in the specified scenario. The specific graphs are chosen somewhat arbitrarily, but represent a range of possible structures for the purpose of demonstrating the algorithm's potential.

In Scenario 1,  we focus exclusively on optimizing tile types. Table \ref{table:S1Verify} shows our algorithm's computed optimal pots, which demonstrate agreement with the proven $T_1$ values of various other papers. Table \ref{table:S1New} shows the results of the algorithm on a set of various new graphs. These graphs include small examples of social networks, Cartesian products of two different graphs, and disconnected graphs. Each of these sets of graphs was chosen because they represent a possible new area of exploration in finding optimal pots, and they have varying structures that will challenge the algorithm in different ways.

Of the tested graphs, only Les Misérables, $K_{100}$, and Karate Club $\times$ Petersen have runtimes greater than one second. These are all graphs of substantial order, having 77, 100, and 170 vertices respectively. Furthermore, the Les Misérables graph is slower ($\times$1.62) than the $K_{100}$ graph, despite having far fewer edges (242 versus 4950). This suggests that the runtime is generally dependent on the number of vertices rather than the number of edges. This can be explained by the fact that C4 and C5 of the pot construction constraints each enforce $N^2$ constraints, so the number of constraints grows quadratically with the number of vertices.

We have also used this algorithm to find optimal pots in Scenario 1 for disconnected graphs. Since Scenario 1 is not concerned with the construction of smaller graphs, the question of an optimal pot for a disconnected graph is well-posed. At the time of this writing, there are no known previous results for disconnected graphs in the flexible-tile model. The optimal pot is not always simply the union of the optimal pots for each component, as proven by the disconnected graph consisting of $K_4, S_3$, and $P_2$. It is known that the optimal pots of these three graphs in Scenario 1 are $\{a^2\hat{a},a\hat{a}^2\},\{a^3,\hat{a}\},$ and $\{a,\hat{a}\}$, respectively, which under a union create a pot $P$ with $\#\Sigma (P) = 4$ (as shown in Figure \ref{fig:disconnected_independent}). Our algorithm finds a pot $P$ realizing this graph with $P$ with $\#\Sigma (P) = 3$ (as shown in Figure \ref{fig:disconnected_optimal}). This opens the door for exploration of ways in which optimal pots for disconnected graphs relate to the optimal pots of graph's components.

\begin{figure}
\centering
\includegraphics[width=0.3\linewidth]{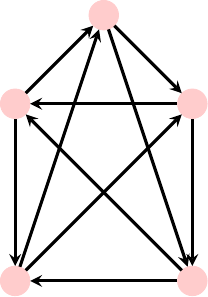}
\quad
\includegraphics[width=0.3\linewidth]{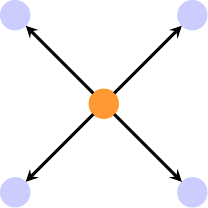}
\quad
\includegraphics[width=0.045\linewidth]{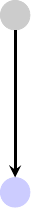}
\caption{Union of separate optimal pots for $T_1(K_5),T_1(S_4),T_1(P_2)$, realizing the graph $K_5\cup S_4\cup P_2$ with the 4-tile pot $\{a^2\hat{a}^2,a^4,\hat{a},a\}$. All edges are labeled $a$ in Scenario 1, so only the orientation is shown.}
\vspace{1em}
\label{fig:disconnected_independent}
\includegraphics[width=0.3\linewidth]{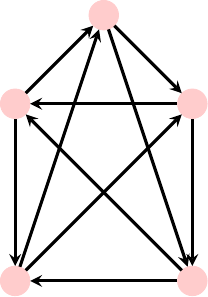}
\quad \includegraphics[width=0.3\linewidth]{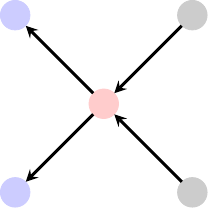}
\quad \includegraphics[width=0.045\linewidth]{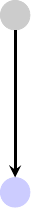}
\caption{Optimal orientation for $T_1(K_5\cup S_4\cup P_2)$, realized with the 3-tile pot $\{a^2\hat{a}^2,\hat{a},a\}$.}
\label{fig:disconnected_optimal}
\end{figure}

\begin{table}
\caption {Scenario 1 OOPS Verification}
\label{table:S1Verify}
\vspace{0.8em}
\scalebox{0.8}{
\begin{tabular}{llllll}
Graph & Vertices & Edges & Known Value &  Computed Pot & Runtime (s) \\ \hline
  Octahedron & 6&12    &        $T_1=1$                              &   $a\hat{a}$          &   0.01                \\ \cite{sorrels} & & & $B_1=1$  \\ \hline
  Lollipop graph $L_{8,10}$ & 18 & 38    &  $T_1=4$         &   $a^3\hat{a}^4, a^7\hat{a}, a\hat{a}, a$          &   0.09                \\ \cite{griffin2023tile} & & & $B_1=1$\\  \hline
  Dodecahedron & 20&30    &       $T_1=2$                              &    $a^3,a\hat{a}^2$         &   0.03                \\ \cite{sorrels} & & & $B_1=1$ \\\hline
  Gear graph $G_{10}$ & 21 & 30      &   $T_1=3$      &     $a\hat{a}^2, a^5, a\hat{a}$        &    0.04               \\ \cite{mattamira2020dna} & & & $B_1=1$  \\ \hline
  $5 \times 5$ Square Lattice  & 25&60    &   $T_1=4$                   &   $\hat{a}^2,a^2\hat{a},\hat{a}^4,a^2\hat{a}^2$          &   0.11                \\ \cite{sorrels} & & & $B_1=1$\\  \hline
  $5 \times 5$ Rook's Graph & 25 & 100 &   $T_1=1$        &      $a^4\hat{a}^4$       &    0.03               \\  \cite{gonzalez2020tile} & & & $B_1=1$ \\ \hline
    Wheel graph $W_{30}$ & 30 & 58     &    $T_1=2$                                    &    $\hat{a}^{15},a^2\hat{a}$         &  0.05                 \\ \cite{lopez2023self} & & & $B_1=1$\\ \hline
  $4 \times 10$ Triangle Lattice & 30&69    &    $T_1=5$            &  $a\hat{a}, a\hat{a}^3, a\hat{a}^2, a^2\hat{a}^3, a^4\hat{a}^2$           &  0.27      \\ \cite{almodovar2021optimal} & & & $B_1=1$ & \\ \hline
  $K_{100}$ & 100 & 4950  &    $T_1=2$                                    &    $\hat{a}^{99},a^{50}\hat{a}^{49}$         &       2.21 \\ \cite{ellis2014minimal} & & & $B_1=1$   
\end{tabular}}
\end{table}

\begin{table}
\begin{center}
\caption {Scenario 1 OOPS New Results} 
\label{table:S1New}
\vspace{0.8em}
\scalebox{0.8}{
\begin{tabular}{llllll}
Graph & Vertices & Edges & Computed & Optimal Pot & Runtime  \\ & & & values & & (seconds)  \\ \hline
$K_5$,$S_4$,$P_2$& 12 & 15 &   $T_1=3$                  &    $a^2\hat{a}^2,\hat{a},a$            &   0.03                             \\ & & & $B_1=1$ \\ \hline
 Tadpole 3 $\times$ $C_4$& 24 & 48 &   $T_1=3$                 &   $a^2\hat{a}^2,a^2\hat{a}^3,a^2\hat{a}$             &    0.12                            \\ & & & $B_1=1$ \\ \hline
  Connected & 25 & 50 & $T_1=3$    &  $a\hat{a}^3,a^3,a^4\hat{a}$              &      0.08                          \\ Caveman (5, 5) & & & $B_1=1$ \\ \hline
 Karate Club & 34 &  79   &    $T_1=11$                 &     $a^6\hat{a}^{10}, a\hat{a}^8, a^3\hat{a}^7, a^3\hat{a}^3, a^2\hat{a}, a\hat{a}^3$           &   0.58                             \\ 
 & & & $B_1=1$ & $a^4\hat{a}, a^2, \hat{a}, a^2\hat{a}^{10}, a^8\hat{a}^9$ & \\ \hline
  All Platonic     & 50 & 90 &     $T_1=5$                  &    $a^2\hat{a}, a\hat{a}^2, a^2\hat{a}^3, a^2\hat{a}^2, a^3\hat{a}^2$            & 0.53 \\             Solids        & & & $B_1=1$ \\ \hline  
 Les Misérables & 77 & 254 &  $T_1=18$                   &     $a, a\hat{a}^9, a^2\hat{a}, a^{11}\hat{a}^{25}, a\hat{a}, a^4\hat{a}^3, a^5\hat{a}^4$           &  3.59                              \\ 
& & & $B_1=1$ & $a^7\hat{a}^8, a^5\hat{a}^6, a^8\hat{a}^8, a^{11}\hat{a}^6, a^2\hat{a}^2, a^2\hat{a}^6$ & \\ 
& & & & $a^4\hat{a}^2, a^{12}\hat{a}^{10}, a^7\hat{a}^{12}, a^8\hat{a}^5, a^9\hat{a}^3$ & \\ \hline
 $3\times 3$-Sudoku & 81 & 810 & $T_1=1$                    &   $a^{10}\hat{a}^{10}$             &  0.34                              \\ & & & $B_1=1$ \\ \hline
 Karate Club  & 340 & 1290 &     $T_1=11$                &        $a^8\hat{a}^4, a^{11}\hat{a}^9, a^7\hat{a}^6, a^5\hat{a}^4, a^3\hat{a}^3, a^4\hat{a}^3$        &    1797.36                            \\ 
 $\times$ Petersen & & & $B_1=1$ & $a^4\hat{a}^4, a^2\hat{a}^3, a^2\hat{a}^2, a^8\hat{a}^7, a^7\hat{a}^{12}$ & \\
      
\end{tabular}}
\end{center}
\end{table}
\subsection{Implementation of Scenario 2 OPP}
In Scenario 2 there are two different versions of optimality: tile optimality and bond-edge optimality. Using the search process described in Section \ref{subsec:s2_alg}, our algorithm first optimizes over tile types, and then verifies the computed value for bond-edge types. In general, we see that the runtime of the algorithm grows with the size of the solution (so values of $T_2(G)$ and $B_2(G)$) rather than the size or order of the graph in question. For instance, the path graph $P_{10}$ has a much longer runtime than the complete graph $K_{100}$ despite having a far smaller size (9 vs. 4950) and order (10 vs 100) (as shown in Table \ref{table:S2verify}). This is because the optimal pot for the path graph consists of six tile types and five bond-edge types, whereas the optimal pot for $K_{100}$ consists of only two tile types and 1 bond-edge type.

In addition to reproducing many known optimal pots in Scenario 2 (Table \ref{table:S2verify}), we use our algorithm to compute many novel optimal pots. One solution of notable interest is the optimal pot for the dodecahedron, as it was the only remaining unsolved platonic solid in Scenario 2 \cite{almodovar2021optimal}. The realization of this graph using the optimal pot can be seen in Fig. \ref{fig:dodecahedron}. Other examples of novel solutions we computed are listed in (Table \ref{table:S2new}). We provide optimal pots but do not provide realizations of each graph here (for the purposes of saving space), although all results can be easily reproduced by downloading and running \href{https://github.com/JacobAshw/OOPS}{OOPS} (found at https://github.com/JacobAshw/OOPS), which displays a visual representation of the graph realization upon computation of the optimal pot. 

It is important to note that the bond verification step of the program only improves upon the solution computed in the tile runtime step when no biminimal pot exists, i.e. the optimal number of tile and bond-edge types cannot be achieved in the same pot. In Scenario 2, an example of a graph that exhibits this property is $Y_{3,3}$ (the stacked prism graph), discovered by Toby Anderson, Olivia Greinke, Iskandar Nazhar, and Luis Santori (pending citation). In practice this property is exceedingly rare (currently $Y_{3,3}$ is the only known example), so the bond verification step nearly never improves upon the pot found in the first step, despite taking much longer.

\begin{figure}
    \centering
    \includegraphics[width=\linewidth]{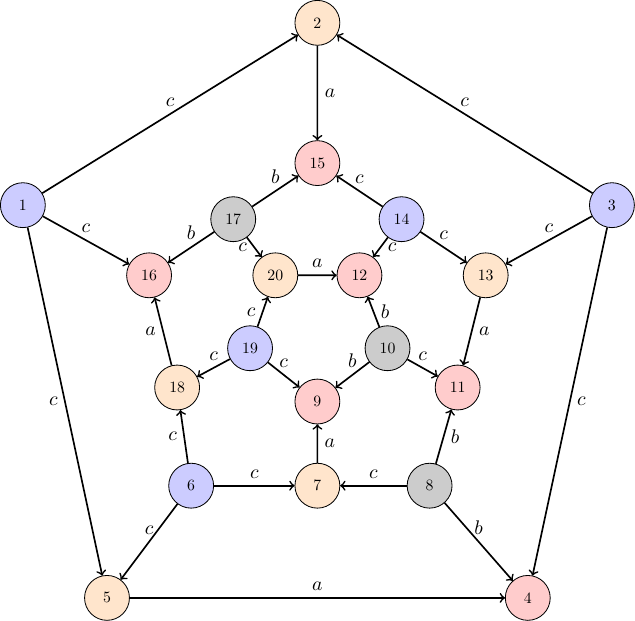}
    \caption{Optimal pot for $T_2$ and $B_2$ realizing the dodecahedron.}
    \label{fig:dodecahedron}
\end{figure}

\begin{table}
{\centering
\caption {Scenario 2 OOPS Verification} 
\vspace{0.8em}
\scalebox{0.8}{
\begin{tabular}{lllp{1.7cm}lll}
Graph & Vertices & Edges & Known & Computed Pot & Tile & Bond \\
 & & & Values & & Runtime (s) & Verification (s) \\ \hline
Tetrahedron      & 4 & 6 &      $T_2=2$    &         $a^2\hat{a},\hat{a}^3$        &      0.02 & 0    \\ \cite{sorrels} & & & $B_2=1$ \\ \hline
Wheel $W_5$      & 5 & 8 &    $T_2=2$   &        $a^2\hat{a},\hat{a}^4$         &  0.02 & 0                 \\ \cite{lopez2023self} & & & $B_2=1$ \\ \hline
Lollipop $L_{3,3}$      & 6 & 6 &  $T_2=5$      & $a\hat{c},\hat{a}\hat{b},bc^2,b$            &        4.29 &  62.66          \\ \cite{griffin2023tile} & & & $B_2=3$ \\ \hline
$2\times 3$ Square      & 6 & 7 &   $T_2=4$  &    $a\hat{b}^2, a\hat{b}, \hat{a}^3, bb$         &    2.18   &    0.25        \\ Lattice \cite{almodovar2021optimal} & & & $B_2=2$ \\ \hline
Path $P_{10}$      & 10 & 9 &     $T_2=6$   &        $a\hat{e}, \hat{a}, be, \hat{b}\hat{d}, cd, \hat{c}d$         & 3233 & 62183                  \\ \cite{ellis2014minimal} & & & $B_2=5$ \\ \hline
Gear $G_5$      & 11 & 15 &     $T_2=3$   &        $a\hat{b},\hat{a}b\hat{b},b^5$     &   0.28  & 15.57              \\ \cite{mattamira2020dna} & & & $B_2=2$ \\ \hline
Icosahedron      & 12 & 30 &        $T_2=3$  &    $a\hat{b}^4, \hat{a}^2b\hat{b}^2, bbb\hat{b}^2$                         &       2.40 & 2.24            \\ \cite{almodovar2021optimal} & & & $B_2=2$  \\ \hline
$K_{100}$      & 100 & 4950 &     $T_2=2$       &  $\hat{a}^{99},a^{50}\hat{a}^{49}$   &  8.96       &    0   \\ \cite{ellis2014minimal} & & & $B_2=1$
\end{tabular}}}
\label{table:S2verify}
\end{table}
\begin{table}
\caption {Scenario 2 OOPS New Results} 
\begin{center}
\vspace{0.8em}
\scalebox{0.8}{
\begin{tabular}{lllp{1.7cm}lll}
Graph & Vertices & Edges & Computed  & Computed  & Tile  & Bond  \\
      &          &       &    values           &  Pot          &  Runtime (s)        &  Verification (s)         \\ \hline
$2\times 4$ Square      & 8 & 10 &   $T_2=4$                    &  $a\hat{c}, \hat{a}\hat{b}, bc\hat{c}, \hat{b}c^2$              &   28.73          &         9744.89          \\ Lattice & & & $B_2=3$ \\ \hline
Turan(8,4)      & 8 & 24 &    $T_2=3$\newline$B_2=2$                    & $a\hat{b}^5, \hat{a}^2b^2\hat{b}^2, b^4\hat{b}^2$               &    1.82         & 5.40                  \\ \hline
$2\times 5$ Square & 10 & 13 & $T_2=5$ & $a\hat{c}, \hat{a}\hat{c}, bc^2, \hat{b}^3, c\hat{c}^2$ & 558.27 & N/A \\ Lattice & & & $B_2\leq 3^*$ \\ \hline
Petersen Graph      & 10 & 15 &   $T_2=3$\newline$B_2=2$     &    $a\hat{b}^2, \hat{a}\hat{b}^2, b^3$            &    0.31         &    0.38               \\ \hline
Mobius Ladder & 10 & 15 & $T_2=3$ & $a\hat{b}^2, \hat{a}^3, b^2\hat{b}$ & 1.25 & 0.48 \\ $M_{10}$ & & & $B_2=2$ \\ \hline
Truncated       & 12 & 18 & $T_2=4$                       &   $a\hat{c}^2, \hat{a}\hat{b}\hat{c}, bc\hat{c}, \hat{b}c^2$             &    161.56         &     N/A              \\ Tetrahedron & & & $ B_2\leq 3^*$ \\ \hline
$3\times 5$ Square      & 15 & 22 &  $T_2=4$  &  $ac\hat{c}, \hat{a}^2\hat{b}^2, \hat{a}b\hat{c}^2, \hat{a}c$   &  97.02           &         N/A          \\ Lattice & & & $B_2\leq 3^*$ \\ \hline
Tesseract & 16 & 32 &$T_2=4$ & $a\hat{c}^3, \hat{a}\hat{c}^3, b^2\hat{b}c, \hat{b}^3c$ & 9509.31 & N/A \\ (4-cube) & & & $B_2\leq 3^*$ \\ \hline
Dodecahedron      & 20 & 30 &  $T_2=4$ & $a\hat{c}^2, \hat{a}\hat{b}\hat{c}, b^2c, c^3$   &  682.83              &     169285                          \\ & & & $B_2=3$

\end{tabular}}
\end{center}
\raggedright{\footnotesize *indicates bond verification was unable to complete due to time constraints}
\label{table:S2new}
\end{table}
\newpage 
\section{Conclusion}
\label{sec:conclusion}
This paper describes three separate computational tools to tackle some of the combinatorial problems associated with DNA self-assembly. The first tool, the SRP integer program, solves the Subgraph Realization Problem for any given pot. The second tool, the Scenario 1 integer program, solves the OPP in Scenario 1 on large graphs. The third tool, the Scenario 2 integer program, solves the OPP in Scenario 2, although it is much slower than the Scenario 1 integer program, and therefore cannot handle graphs with complex optimal pots. These tools provide researchers working on theoretical flexible tile self-assembly with ways to generate and verify solutions that were not previously possible. The SRP integer program allows researchers to verify the optimality of pots in Scenario 2, and the Scenario 1 and 2 integer programs allow for the generation of optimal pots which researchers may be able to generalize to wider families of graphs.

An immediate natural extension of this work is the creation of a Scenario 3 integer program. To do this, two primary challenges must be addressed. First, such a program would require the utilization of a computational tool to solve the non-isomorphic graph realization problem which defines Scenario 3. Once such a tool exists, it can be used similarly to the Scenario 2 integer program to create an additional verification loop in the algorithm. The second challenge is the growing search space. In general, Scenario 3 pots tend to be larger than those in Scenario 2. Since the runtime of our algorithm depends on the complexity of the optimal pot, this will result in significant slowdown. 

There is also opportunity for future work in optimizing the Scenario 2 integer program. Several inefficiencies remain apparent in the algorithm presented here. Many of the different pots generated by the Scenario 2 integer program are equivalent (up to swapping tile labels), which results in the unnecessary checking of a significant number of pots. Additionally, the Scenario 2 integer program is still separated into two scripts. Unifying these into a single script could greatly improve efficiency.

Finally, it is possible to use the algorithms presented here to fully explore optimal solutions across broader families of graphs. Specifically, this could be used to algorithmically generate graphs for which a biminimal pot does not exist in Scenario 2. While it is known that such examples exist, there are no known characterizations of graphs with this property. This is also of great interest to the development of the algorithm itself, as the slowest step of the algorithm is the verification of $B_2(G)$. This step would become unnecessary in the vast majority of cases if it were possible to accurately predict which graphs will exhibit this property.

\section{Acknowledgements}
\label{sec:ack}
We would like to thank faculty and staff at ICERM. Specifically, we want to thank the  the Summer@ICERM TAs, Ryan Firestine, Michaela Fitzgerald, Eric Redmon, and Steph Reyes for their support. We would also like to thank our fellow students who participated in the Summer @ ICERM 2023 REU program.

\section{Funding Sources}
\label{sec:funding}
This material is based upon work supported by the National Science Foundation under Grant No. DMS-1929284 while the authors were in residence at the Institute for Computational and Experimental Research in Mathematics in Providence, RI, during the Summer@ICERM program. 





\bibliographystyle{elsarticle-num} 
\bibliography{jmgm_paper}

\end{document}